\newcommand{\C}{{\mathbb  C}}
\newcommand{\N}{{\mathbb  N}}
\newcommand{\R}{{\mathbb  R}}
\newcommand{\Z}{{\mathbb  Z}}
\newcommand{\Det}{{\operatorname{Det}}}
\newtheorem{theorem}{Theorem}[section]
\newtheorem{lemma}[theorem]{Lemma}
\newtheorem{proposition}[theorem]{Proposition}
\newtheorem{definition}[theorem]{Definition}
\newtheorem*{remarks*}{Remarks}
\newtheorem*{remark*}{Remark}
\title{Rapid polynomial approximation on Stein manifolds}
\author{Au\dh unn Sk\'uta Sn\ae bjarnarson}
\begin{document}
\maketitle

\section*{}\small
\textbf{Abstract.} In this paper we generalize to a certain class of Stein manifolds the Bernstein-Walsh-Siciak theorem which describes the equivalence between possible holomorphic continuation of a function $f$ defined on a compact set $K$ in $\C^N$ to the rapidity of the best uniform approximation of $f$ on $K$ by polynomials. We also generalize Winiarski's theorem which relates the growth rate of an entire function $f$ on $\mathbb{C}^N$ to its best uniform approximation by polynomials on a compact set.

\normalsize

\section{Introduction}

The famous Runge-Oka-Weil theorem can be phrased in the following way:
\begin{center}
	If $K\subset \C^N$ is compact and polynomially convex, then\\ $\lim_{n\to\infty}d_K(f,\mathcal{P}_n)=0$ for every $f\in \mathcal{O}(K)$.
\end{center}
Here $\mathcal{P}_n$ is the set of polynomials in $\C^N$ of degree less than or equal to $n$ and $d_K(f,\mathcal{P}_n):=\inf\{\|f-p\|_K:\; p\in \mathcal{P}_n \}$ is the best uniform approximation of $f$ on $K$ by polynomials in $\mathcal{P}_n$. In \cite{Sic:1962} Siciak proves a precise quantitative version of the Oka-Weil-Runge theorem. More specifically, provided certain regularity conditions on $K$, he proves that 
\begin{align}
	\limsup_{n\to\infty} \left(d_K(f,\mathcal{P}_n)\right)^{1/n}\leq L^{-1}\label{abcdefg}
\end{align} 
if and only if $f$ extends as a holomorphic function to $\{z\in \C^N;\; V_K(z)<\log(L) \}$. Here $V_K$ denotes the Siciak-Zakharyuta extremal function defined as the supremum of all entire plurisubharmonic functions $u$ of minimal growth with $u|_K\leq 0$ (see for example \cite{Sic:1981} or \cite{Kli:1991}). Taking Siciak's theorem into consideration, we say that a function $f$ admits to rapid approximation by polynomials on $K$ if (\ref{abcdefg}) is true for some $L>1$. Goncar \cite{Gon:1972,Gon:1974,Gon:1975} and Cirka \cite{Cir:1976,Cir:1976:2} have proven theorems in a similar spirit, regarding rapid approximation by rational functions on $\C^N$ (defined in an analogous way).

Following Stoll \cite{Stoll:1977} we say that a manifold $X$ of complex dimension $N$ is {\it $S$-parabolic} if it possesses a plurisubharmonic (psh) exhaustion function $\tau$ which is maximal outside a compact subset $S$ of $X$. Such an exhaustion function is called a {\it special} exhaustion function. The function $\tau$ being maximal outside $S$ is equivalent to $(i\partial \bar{\partial}\tau)^N=0$ on $X\setminus S$. We say that an entire function $f$ on $X$ is a $\tau$-polynomial if there are constants $t,C\geq 0$ such that
\begin{align*}
	\log|f(z)|\leq t\tau^+(z)+C,\qquad z\in X.
\end{align*}
In their work \cite{Ayt:2015,Ayt:2011,Ayt:2014}, Aytuna and Sadullaev consider the Fréchet-space $\mathcal{O}(X)$ of holomorphic functions on $X$. They construct an example of an $S$-parabolic manifold where the $\tau$-polynomials are not dense in $\mathcal{O}(X)$. Zeriahi \cite{Zer:1991,Zer:1996} introduces analogues of classical pluripotential theory to $S$-parabolic Stein manifolds. He generalizes the theorem of Siciak and a theorem of Winiarski \cite{Win:1970} to algebraic varieties.

In this paper we consider a Stein manifold $X$ with a psh exhaustion function $\psi$ such that the $(1,1)$-form $\frac{i}{2}\partial \bar{\partial}e^\psi$ satisfies certain curvature properties (which we discuss in detail in Section 3). When the curvature properties in question are satisfied, we prove a theorem on rapid approximation by $\psi$-polynomials on compact subsets of $X$. We also prove a generalization of Winiarski's theorem to such manifolds. Our main results are stated in Section 3 and proven in Section 5. In Section 4 we look at a few examples of functions $\psi$ satisfying the aforementioned curvature properties.

\textbf{Acknowledgments.} The author would like to thank his doctoral advisor, Ragnar Sigurðsson, for reviewing this paper and giving helpful comments. This project was funded by The Doctoral Grants of The University of Iceland Research Fund and by The Icelandic Center for Research (Rannís) grant no. 152572-052.

\section{Preliminaries}

Let $X$ be a Stein manifold and $\psi:X\to \R$ be a plurisubharmonic exhaustion function. The notion of a $\psi$-polynomial on $X$ was introduced by Zeriahi in \cite{Zer:1991}.
\begin{definition}
	We say that a function $f\in \mathcal{O}(X)$ is a $\psi$-polynomial if there exist constants $t$ and $C$ such that
	\begin{align}
		\log|f(z)|\leq t\psi^+(z)+C,\qquad z\in X,\label{poly}
	\end{align} 
	where $\psi^+(z):=\max\{0,\psi(z) \}$ is the positive part of $\psi$. We denote by $\mathcal{P}^\psi$ the space of $\psi$-polynomials on $X$ and for a fixed $t>0$ we denote by $\mathcal{P}^\psi_{t}$ the set of $\psi$-polynomials on $X$ satisfying inequality (\ref{poly}) for some constant $C$. If $f$ is a $\psi$-polynomial on $X$ then the $\psi$-degree of $f$ is 
	\begin{align*}
		\deg_\psi(f):=\inf\{t>0:\; f\in\mathcal{P}^\psi_{t} \}.
	\end{align*}
\end{definition}

Note that if $X=\C^N$ and $\psi(z)=\log\|z\|$ then the notion of a $\psi$-polynomial coincides with the classical notion of a polynomial. The polynomial spaces $\mathcal{P}^\tau_{t}$ are of particular interest when the function $\tau$ is a special exhaustion because then they are of finite dimension. More specifically we have:
\begin{proposition}[\cite{Zer:1991}, Th\'eor\`eme 4.8]\label{dimensional}
	If $\tau$ is a special exhaustion function on the $N$-dimensional manifold $X$, there exists a constant $M$ such that
	\begin{align*}
		\dim \mathcal{P}^\tau_{n}\leq{{N +nM}\choose{N }},\qquad n\in \N.
	\end{align*}
\end{proposition}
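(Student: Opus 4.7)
The plan is to exhibit, for a fixed base point $p_0\in X$, an injective linear map from $\mathcal{P}^\tau_n$ into the space of $\lfloor nM\rfloor$-jets of holomorphic germs at $p_0$. Since the target has dimension $\binom{N+\lfloor nM\rfloor}{N}\leq \binom{N+nM}{N}$, the bound on $\dim\mathcal{P}^\tau_n$ will follow at once. In local holomorphic coordinates $(z_1,\dots,z_N)$ centred at $p_0$ the $k$-jet of $f\in \mathcal{O}_{X,p_0}$ is nothing but its Taylor polynomial of degree $\leq k$, so injectivity of the $k$-jet map on $\mathcal{P}^\tau_n$ is equivalent to the statement that no nonzero $f\in \mathcal{P}^\tau_n$ vanishes to order strictly greater than $k$ at $p_0$. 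The whole proposition thus reduces to the following multiplicity estimate: there exists a constant $M>0$, depending only on $\tau$ and the choice of $p_0$, such that every nonzero $f\in \mathcal{P}^\tau_n$ satisfies
\begin{align*}
\nu_{p_0}(f) \leq nM,
\end{align*}
where $\nu_{p_0}(f)$ denotes the order of vanishing of $f$ at $p_0$.

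To prove this multiplicity estimate, I would exploit the fact that $\tau$ is a \emph{special} exhaustion. The positive Borel measure $(i\partial\bar\partial\tau)^N$ vanishes on $X\setminus S$ by maximality, and so has compact support in $S$ with finite total mass $\mu_\tau:=\int_X (i\partial\bar\partial\tau)^N$. Fix any $f\in\mathcal{P}^\tau_n\setminus\{0\}$ with $\nu_{p_0}(f)=k$, and use the growth condition $\log|f|\leq n\tau^+ + C_f$ coming from the definition of $\mathcal{P}^\tau_n$, together with the Lelong-Poincar\'e formula and Bedford-Taylor theory applied on the sublevel sets $\{\tau<R\}$ for $R$ large enough that $S$ and $p_0$ lie inside. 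This should produce, after integration by parts, an inequality of the shape
\begin{align*}
k\cdot c(p_0,\tau)\;\leq\;n\cdot \mu_\tau,
\end{align*}
where $c(p_0,\tau)>0$ is a positive local constant (essentially the Lelong number at $p_0$ of a suitable Green-type potential adapted to $\tau$). Setting $M:=\mu_\tau/c(p_0,\tau)$ then yields the required bound $k\leq nM$, and combined with the jet-map argument of the first paragraph completes the proof.

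The main obstacle will be carrying out this pluripotential-theoretic comparison cleanly: one must invoke Demailly's comparison theorem for Lelong numbers (or an equivalent Bezout-type intersection estimate on the parabolic Stein manifold $X$) to control the mixed intersection number
\begin{align*}
\int_{\{\tau<R\}} i\partial\bar\partial\log|f|\wedge (i\partial\bar\partial\tau)^{N-1}
\end{align*}
simultaneously from below by a positive multiple of $k$ and from above by $n\mu_\tau$. The finiteness and compactness of support of $(i\partial\bar\partial\tau)^N$, which is precisely what distinguishes special exhaustions from arbitrary plurisubharmonic ones, is exactly what makes the upper bound possible; without maximality of $\tau$ outside a compact set, $\dim\mathcal{P}^\tau_n$ can very well be infinite, as in the examples of Aytuna and Sadullaev cited in the introduction. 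Once the analytic input above is in place, the rest of the argument is formal.
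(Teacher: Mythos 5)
First, a point of reference: the paper does not prove this proposition at all — it is quoted from Zeriahi \cite{Zer:1991} (Th\'eor\`eme 4.8) — so there is no in-paper argument to compare with. Your overall architecture (fix $p_0$, map $\mathcal{P}^\tau_n$ into a jet space at $p_0$, and reduce everything to the multiplicity bound $\nu_{p_0}(f)\leq nM$ for nonzero $f\in\mathcal{P}^\tau_n$) is correct and is in fact the strategy behind Zeriahi's proof, which runs through a Schwarz-type inequality.

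The gap is in the key analytic step. The two-sided estimate you display,
\[
k\,c(p_0,\tau)\;\leq\;\int_{\{\tau<R\}} i\partial\bar\partial\log|f|\wedge\bigl(i\partial\bar\partial\tau\bigr)^{N-1}\;\leq\;n\,\mu_\tau,
\]
cannot be established on the left as stated, because $(i\partial\bar\partial\tau)^{N-1}$ need not detect the multiplicity of $f$ at $p_0$ at all. Already on $X=\C^2$ with $\tau=\log\|z\|$, $p_0=(1,0)$ and $f=z_2$, the restriction of $\tau$ to the divisor $\{z_2=0\}$ is $\log|z_1|$, which is harmonic near $z_1=1$; hence the measure $[Z_f]\wedge i\partial\bar\partial\tau$ gives zero mass to a neighborhood of $p_0$ even though $\nu_{p_0}(f)=1$. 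So no local (Lelong--Poincar\'e plus integration by parts) argument with the weight $\tau$ can produce a lower bound proportional to $k$; a global lower bound of this form would essentially be equivalent to the multiplicity estimate one is trying to prove. You gesture at the correct fix (``a suitable Green-type potential''), but that fix is the actual content of the theorem: one must construct a pluricomplex Green function $g_{p_0}$ with a genuine logarithmic pole at $p_0$ — so that Demailly's comparison theorem yields $\int [Z_f]\wedge(dd^cg_{p_0})^{N-1}\geq c\,\nu_{p_0}(f)$ — which simultaneously lies in $\mathcal{L}_\tau$, so that the total mass of $[Z_f]\wedge(dd^cg_{p_0})^{N-1}$ is bounded by $Cn$ via the growth $\log|f|\leq n\tau^++C_f$. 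Proving that such a $g_{p_0}$ exists on a general $S$-parabolic Stein manifold, is locally bounded off $p_0$, and has finite controlled Monge--Amp\`ere mass is precisely the main construction of \cite{Zer:1991}; it is not a routine application of Bedford--Taylor theory and cannot be deferred as ``analytic input''. Until that construction (or an equivalent Schwarz inequality) is supplied, the multiplicity bound, and hence the proposition, is not proved.
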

In \cite{Zer:1991,Zer:1996} Zeriahi considers the case when $X$ is an affine algebraic variety and proves theorems similar to the theorems of Oka-Weil and Siciak. In \cite{Ayt:2011,Ayt:2014,Ayt:2015} Aytuna and Sadullaev consider the polynomial space $\mathcal{P}^\tau$ when $\tau$ is a special exhaustion function. They construct an example where the polynomial space $\mathcal{P}^\tau$ consist only of the constant functions, and another one where $\mathcal{P}^\tau$ is not trivial, but still not dense in the Fréchet-space of holomorphic functions $\mathcal{O}(X)$. As a corollary to our main results of this paper we find sufficient conditions for $\mathcal{P}^\tau$ to be dense in $\mathcal{O}(X)$.

As an analogue to the classical Lelong class $\mathcal{L}$ on $\C^N$ we define the $\psi$-Lelong class on $X$ to be the set
\begin{align*}
	\mathcal{L}_{\psi}:=\{u\in \operatorname{PSH}(X);\; \exists C\geq 0\;\text{ such that }\; u\leq \psi^++C\;\text{ on }\; X \}
\end{align*}
where $\psi$ is any psh exhaustion on $X$ and we define
\begin{align*}
	\mathcal{L}^+_{\psi}:=\{u\in \mathcal{L}_{\psi};\; \exists C\geq 0\;\text{ such that }\; \psi\leq u^++C\;\text{ on }\; X \}.
\end{align*}
If $\psi$ is a special exhaustion function then $\mathcal{L}_{\psi}$ is an abstract Lelong class in the sense of \cite{Zer:2000}. This means that for any compact non-pluripolar set $K\subset X$ the extremal function
\begin{align*}
	V_{K,\psi}(z):=\sup\{v(z);\; v\in \mathcal{L}_{\psi},\;\; v|_K\leq 0 \},\qquad z\in X
\end{align*} 
is well defined, i.e.\ we have $V_{K,\psi}<\infty$. Indeed, the upper semi-continuous regularization $V^*_{K,\psi}$ is a member of $\mathcal{L}_{\psi}$. We also define the function
\begin{align*}
	\Phi_{K,\psi}(z)=\sup\{|f(z)|^{1/t}; f\in \mathcal{P}^\psi_{t},\; \|f\|_K\leq 1,\; t>0 \}.
\end{align*}
In the case when $X=\C^N$ and $\psi(z)=\log\|z\|$ the function $\Phi_K:=\Phi_{K,\psi}$ was originally introduced by Siciak \cite{Sic:1962} in order to extend classical results of approximation and interpolation to holomorphic functions of several complex variables. Later, Zakharyuta \cite{Zak:1976} defined the extremal function $V_K:=V_{K,\psi}$ with $X=\C^N$ and $\psi=\log\|z\|$. It is well known that $\log\Phi_{K}=V_{K}$ for every compact $K\subset \C^N$ (see for example \cite[Theorem 5.1.7]{Kli:1991}), but on a more general manifold such an equality might not be true, even if we assume $\psi$ to be a special exhaustion function. Indeed, as mentioned before, there exists an example of a special exhaustion $\psi$ on a manifold $X$ such that $\mathcal{P}^\psi$ consists only of the constants \cite{Ayt:2015}, in which case we have $\log\Phi_{K,\psi}\equiv 0$. In general we have $\log\Phi_{K,\psi}\leq V_{K,\psi}$.

\section{Results}
In this section we present the main results of this paper. All results are proven in Section 5. First we must introduce some notation. Recall that if $\omega$ is a Kähler-form on $X$ with coefficients $\omega_{j,\overline{k}}$ with respect to a given coordinate system then the Ricci curvature of $\omega$ is given as follows
\begin{align*}
	\operatorname{Ricci}(\omega)=-i\partial \bar{\partial}\log(\Det(\omega_{j,\overline{k}})).
\end{align*}
For any $z\in X$, $r>0$ we denote by $B(z,r,\omega)$ the geodesic ball with center $z$ and radius $r$ with respect to the metric $\omega$.

\begin{definition}\label{skil}
	Let $\psi$ be a psh exhaustion function on the $N$-dimensional manifold $X$ and assume that the $(1,1)$-form $\frac{i}{2}\partial\bar{\partial}e^\psi$ is smooth and strictly positive outside a compact subset $S$ of $X$.
	\begin{enumerate}[(i)]
		\item Let $\theta\in \operatorname{PSH}(X)$. We say that $\theta$ is a Ricci compensator for $\psi$ if it is continuous, strictly psh in a neighborhood of $S$,
		\begin{align*}
			|\theta|\leq A\psi^++B\qquad \text{on }\; X
		\end{align*}
		for some constants $A,B>0$ and 
		\begin{align*}
			\tfrac{i}{2}\partial\bar{\partial}\theta+\operatorname{Ricci}\left(\tfrac{i}{2}\partial\bar{\partial}e^\psi\right)\geq 0,\qquad \text{on }\; X\setminus S.
		\end{align*}
		If there exists a Ricci compensator for $\psi$, then we say that $\psi$ is Ricci compensable.
		\medskip
		\item We say that $\psi$ induces an integral estimate for holomorphic functions if for every $\delta>0$ there are constants $A,B$ such that for every $z\in X\setminus S$ and every function $F\in \mathcal{O}(\overline{B}(z,\delta,\frac{i}{2}\partial\bar{\partial}e^\psi))$
		\begin{align*}
			|F(z)|^2\leq e^{A\psi^+(z)+B}\int_{B(z,\delta,\tfrac{i}{2}\partial \bar{\partial}e^\psi)\setminus S}|F|^2\left(\tfrac{i}{2}\partial\bar{\partial}e^\psi\right)^N.
		\end{align*}
	\end{enumerate}
\end{definition}
Our first main result is the following.
\begin{theorem}\label{adal0}
	Let $\psi$ be a psh exhaustion function on $X$ which is Ricci compensable and induces an integral estimate for holomorphic functions. Let $K\subset X$ be compact and $\varphi\in \mathcal{L}_\psi^+$ be a continuous function satisfying $\varphi|_K\leq 0$. Then for every $L\in ]1,\infty[$ and every function $f$ holomorphic on $K_L:=\{z\in X;\; \varphi(z)<\log(L) \}$ we have
	\begin{align*}
		\limsup_{t\to\infty}\left(d_K(f,\mathcal{P}^\psi_{t})\right)^{1/t}\leq L^{-1}.
	\end{align*}
\end{theorem}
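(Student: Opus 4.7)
The strategy is the classical Bernstein-Walsh construction via Hörmander-type $L^2$ estimates for $\bar\partial$, adapted to the geometry carried by $\omega:=\frac{i}{2}\partial\bar\partial e^\psi$. Fix $L''<L'<L$. Since $\varphi\in\mathcal L^+_\psi$ gives $\psi\leq\varphi^++C$, the sublevel set $K_{L'}=\{\varphi<\log L'\}$ is relatively compact in $X$. I would choose a smooth cutoff $\chi$ with $\chi\equiv 1$ on $K_{L''}$ and $\operatorname{supp}\chi\Subset K_{L'}$, so that $\chi f$ extends by zero to a smooth function on $X$ and $\bar\partial(\chi f)$ is a $\bar\partial$-closed $(0,1)$-form supported in the compact ``annulus'' $A=\{\log L''\leq \varphi<\log L'\}$.

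Using the Ricci compensator $\theta$, I would equip $X\setminus S$ with the weight $\Phi_t:=2t\varphi+\theta$. Then
\[
\tfrac{i}{2}\partial\bar\partial\Phi_t+\operatorname{Ricci}(\omega)\geq 2t\cdot\tfrac{i}{2}\partial\bar\partial\varphi\geq 0,
\]
which, combined with strict plurisubharmonicity of $\theta$ near $S$ and Steinness of $X$, yields a Hörmander $L^2$ solution $u_t$ with $\bar\partial u_t=\bar\partial(\chi f)$ satisfying
\[
\int_X |u_t|^2 e^{-\Phi_t}\omega^N\leq C\int_{A}|\bar\partial(\chi f)|^2_\omega\, e^{-\Phi_t}\omega^N.
\]
On $A$ one has $\varphi\geq\log L''$, while $|\bar\partial(\chi f)|_\omega$, $\theta$ and the $\omega$-volume of $A$ are bounded independently of $t$, so the right-hand side is at most $C_1(L'')^{-2t}$. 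Set $p_t:=\chi f-u_t\in\mathcal O(X)$.

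For $z\in K$, $\varphi(z)\leq 0<\log L''$, so $\chi\equiv 1$ on a neighborhood of $K$ and $u_t$ is holomorphic there. A submean-value inequality on a fixed small $\omega$-ball, combined with the uniform boundedness of $\varphi$ and $\theta$ on a compact neighborhood of $K$ (so that $e^{-\Phi_t}$ is bounded below by a constant independent of $t$), gives $|u_t(z)|^2\leq C'(L'')^{-2t}$, hence $\|f-p_t\|_K\leq C''(L'')^{-t}$. For the $\psi$-polynomial growth of $p_t$, I apply the integral estimate of Definition~\ref{skil}(ii) to the holomorphic function $p_t$ at a point $z\in X\setminus S$:
\[
|p_t(z)|^2\leq e^{A\psi^+(z)+B}\int_{B(z,\delta,\omega)}|p_t|^2\omega^N.
\]
The $\chi f$-part is globally bounded; for the $u_t$-part, write $|u_t|^2=|u_t|^2 e^{-\Phi_t}\cdot e^{\Phi_t}$ and use $\varphi\leq\psi^++C$ together with $|\theta|\leq A\psi^++B$, plus the geometric fact that $\omega$-balls of fixed radius at points far from $S$ have bounded $\psi$-oscillation (which follows from $\omega$ being the Levi form of $e^\psi$). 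This yields $\sup_{B(z,\delta,\omega)}e^{\Phi_t}\leq e^{(2t+A')\psi^+(z)+B'}$, so $\log|p_t(z)|\leq (t+M)\psi^+(z)+C_t$ with $M$ independent of $t$; equivalently $p_t\in\mathcal P^\psi_{t+M}$.

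Combining the two estimates gives $d_K(f,\mathcal P^\psi_{t+M})\leq C(L'')^{-t}$, so after relabelling the index one obtains $\limsup_{t\to\infty}d_K(f,\mathcal P^\psi_t)^{1/t}\leq 1/L''$, and letting $L''\nearrow L$ finishes the proof. The main obstacle is the passage from the weighted $L^2$-bound on $u_t$ to the pointwise $\psi^+$-growth of $p_t$: this is precisely where the integral estimate of Definition~\ref{skil}(ii) replaces the elementary submean/Cauchy estimates of the classical $\C^N$ setting, and where one must control the oscillation of the weight $\Phi_t$ over $\omega$-geodesic balls of fixed radius — a point that is forced on us by the non-Euclidean metric $\omega$ and that ultimately explains why both hypotheses on $\psi$ are required.
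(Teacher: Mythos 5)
Your architecture is essentially the paper's: cut $f$ off between two sublevel sets of $\varphi$, solve $\bar\partial u_t=f\bar\partial\chi$ in weighted $L^2$ with the weight $e^{-2t\varphi}$ twisted by the Ricci compensator $\theta$, set $p_t=\chi f-u_t$, and use the integral estimate of Definition \ref{skil}(ii) together with the bounded $\psi$-oscillation on $\omega$-balls (the paper's Lemma \ref{l52}) to convert the weighted $L^2$ bound into the two pointwise facts ($p_t\in\mathcal{P}^\psi_{t+M}$ and smallness of $u_t$ on $K$). The only structural difference is bookkeeping: the paper folds $\theta$ into the weight as a convex combination $\tilde{\varphi}_T=(1-T^{-1})\varphi+T^{-1}\theta$ (Lemma \ref{vvvv}) and then invokes Theorem \ref{adal3}, whereas you add $\theta$ with a $t$-independent coefficient; both are acceptable.

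The one step that does not go through as written is the $L^2$ estimate itself. With $\Phi_t=2t\varphi+\theta$ you only obtain $\tfrac{i}{2}\partial\bar\partial\Phi_t+\operatorname{Ricci}(\omega)\geq 0$ on $X\setminus S$, i.e.\ semi-positivity. The H\"ormander--Demailly estimate requires the curvature operator to be strictly positive on the support of the data $f\bar\partial\chi$, and the constant in the estimate is controlled by the inverse of its smallest eigenvalue there; since $\varphi$ and $\theta$ may well be pluriharmonic on the annulus $A$, your inequality $\int_X|u_t|^2e^{-\Phi_t}\omega^N\leq C\int_{A}|\bar\partial(\chi f)|^2_\omega e^{-\Phi_t}\omega^N$ with a uniform $C$ does not follow from what you have written. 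The paper supplies the missing positivity by twisting the trivial line bundle with the metric $(1+e^{\psi})^{-2}$ (Theorem \ref{hormander}, Skoda's estimate): its curvature $2i\partial\bar\partial\log(1+e^{\psi})\geq 4\omega(1+e^{\psi})^{-2}$ is strictly positive on $X\setminus S$, at the price of the factor $(e^{\psi}+1)^{-2}$ appearing on the left-hand side of the estimate --- a price that costs only a bounded shift of the $\psi$-degree of $p_t$. You need this, or an equivalent device of controlled growth, to make the step rigorous. A second, minor, slip: $e^{-\Phi_t}$ is \emph{not} bounded below independently of $t$ on a compact neighborhood of $K$, because $\varphi$ may be positive off $K$; one only gets $e^{-2t\varphi}\geq(1+\epsilon)^{-2t}$ on $\{\varphi<\log(1+\epsilon)\}$, so the conclusion on $K$ is $\|f-p_t\|_K\leq C\bigl((1+\epsilon)/L''\bigr)^{t}$, and the spurious factor must be removed by letting $\epsilon\to 0$ at the very end, exactly as the paper does.
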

Notice that if the function $\psi$ is a member of $\mathcal{L}_\tau$ for some special exhaustion function $\tau$ then the polynomial spaces $\mathcal{P}^\psi_{t}$ have finite dimension (this follows directly from Proposition \ref{dimensional}). This means that Theorem \ref{adal0} is, in some sense, the strongest in this case. We should note though, that for a given special exhaustion function $\tau$ it is not always possible to find $\psi\in \mathcal{L}_\tau$ satisfying the properties of Definition \ref{skil} (for instance if $\mathcal{P}^\tau$ consists only of constants). We then need larger polynomial spaces if we want to apply Theorem 3.2.

If $\psi\in \mathcal{L}_\tau$ for a special exhaustion function $\tau$, then the extremal function $V_{K,\psi}$ is well defined. If $V_{K,\psi}$ happens to be continuous as well, then we can take $\varphi$ to be equal to $V_{K,\psi}$ in Theorem \ref{adal0}. In this case the converse of Theorem \ref{adal0} is true as well:
\begin{proposition}\label{prop33}
	If $\psi\in \mathcal{L}_\tau$ for some special exhaustion function $\tau$ and $f:K\to \C$ is any function s.t.\ 
	\begin{align}
		\limsup_{t\to\infty}\left(d_K(f,\mathcal{P}^\psi_{t})\right)^{1/t}\leq L^{-1}\label{laks}
	\end{align}
	for some $L>1$, then $f$ is the restriction to $K$ of a function holomorphic on the set $K_L=\{z\in X;\; V_{K,\psi}(z)<\log(L) \}$.
\end{proposition}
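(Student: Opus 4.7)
The plan is the classical Bernstein-Walsh-Siciak telescoping argument, transplanted to the abstract Lelong class $\mathcal{L}_\psi$. The central ingredient is a Bernstein-Walsh-type inequality: for every non-zero $q\in\mathcal{P}^\psi_{t}$,
\begin{align*}
|q(z)|\leq \|q\|_K\exp\bigl(t\,V^*_{K,\psi}(z)\bigr),\qquad z\in X.
\end{align*}
This follows because $u:=\tfrac{1}{t}\log(|q|/\|q\|_K)$ is psh with $\log|q|\leq t\psi^++C$, hence $u\in\mathcal{L}_\psi$, and $u|_K\leq 0$; the definition of $V_{K,\psi}$ then forces $u\leq V^*_{K,\psi}$. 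Here $V_{K,\psi}$ is well-defined because the hypothesis $\psi\in\mathcal{L}_\tau$ implies $\mathcal{L}_\psi\subset\mathcal{L}_\tau$, and the latter is an abstract Lelong class in Zeriahi's sense, so extremal envelopes behave as in the classical setting.

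From (\ref{laks}), for any $M\in(1,L)$ and all $t$ sufficiently large there exist $p_t\in\mathcal{P}^\psi_{t}$ with $\|f-p_t\|_K\leq M^{-t}$. Since $\psi^+\geq 0$, the nesting $\mathcal{P}^\psi_{t}\subset\mathcal{P}^\psi_{t+1}$ holds, so the telescoping differences $q_t:=p_{t+1}-p_t$ lie in $\mathcal{P}^\psi_{t+1}$ with $\|q_t\|_K\leq 2M^{-t}$. For any $L'\in(1,M)$, on the open set $\Omega_{L'}:=\{V^*_{K,\psi}<\log L'\}$ the Bernstein-Walsh estimate gives
\begin{align*}
|q_t(z)|\leq 2M^{-t}(L')^{t+1}=2L'\bigl(L'/M\bigr)^t,
\end{align*}
which is geometrically summable. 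Hence the telescoping series $F(z):=p_{t_0}(z)+\sum_{t\geq t_0}q_t(z)$ converges locally uniformly on $\Omega_{L'}$ and defines a holomorphic function there. Letting $L'\nearrow L$ and $M$ track just above $L'$, the extensions are pairwise compatible and glue to a holomorphic $F$ on $\{V^*_{K,\psi}<\log L\}$. This set differs from the stated $K_L=\{V_{K,\psi}<\log L\}$ only by the pluripolar set where $V_{K,\psi}<V^*_{K,\psi}$, so $F$ is the desired extension; and since $p_t\to f$ uniformly on $K$, we have $F|_K=f$.

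The only non-routine step is the Bernstein-Walsh inequality itself: it rests on the fact that $\mathcal{L}_\psi$ is rich enough both to contain all normalized logarithms of $\psi$-polynomials \emph{and} to ensure that $V^*_{K,\psi}$ is a well-defined psh majorant, which is exactly what the assumption $\psi\in\mathcal{L}_\tau$ buys via the enveloping special exhaustion $\tau$. Once this estimate is in hand, the rest is bookkeeping with a geometric series, and the mild technicality distinguishing $V_{K,\psi}$ from $V^*_{K,\psi}$ is absorbed by the fact that they coincide off a pluripolar set.
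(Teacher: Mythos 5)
Your argument is correct and is essentially the paper's own proof: a telescoping series of $\psi$-polynomial approximants, controlled off $K$ by the Bernstein--Walsh inequality that comes from the envelope defining the extremal function (the inequality $\log\Phi_{K,\psi}\leq V_{K,\psi}$ already noted in Section 2). The one wrinkle is that your detour through $V^*_{K,\psi}$ is unnecessary and slightly weakens the conclusion: since $u=\tfrac{1}{t}\log\bigl(|q|/\|q\|_K\bigr)$ is itself a competitor in the supremum defining $V_{K,\psi}$, you have $u\leq V_{K,\psi}$ pointwise, so the series converges on the sublevel sets of $V_{K,\psi}$ itself and the closing pluripolar-set caveat (which as written only yields holomorphy on the possibly smaller set $\{V^*_{K,\psi}<\log L\}$) can simply be dropped.
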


Observe that if inequality (\ref{laks}) holds for every $L>0$, then $f$ is the restriction to $K$ of an entire function, also denoted by $f$. If $f$ is of finite order $\varrho$ and of finite type $\sigma$ with respect to $\varrho$, then we have a more precise estimate of $d_K(f,\mathcal{P}^\psi_t)^{1/t}$. More precisely, we have a generalization of a theorem of Winiarski \cite{Win:1970} for the special case $X=\C^N$ and $\psi=\log\|z\|$.

\begin{theorem}\label{adal00}
	Assume $\psi$ is Ricci compensable and induces an integral estimate for holomorphic function. Further assume that for every $r>0$ there exist constants $A,B$ such that
	\begin{align}\label{volgrowth}
		\int_{\{\psi(z)<\log(L) \}}\big(\tfrac{i}{2}\partial\bar{\partial} e^{\psi}\big)^N\leq e^{AL^r+B},\qquad L>1.
	\end{align}
	Let $K\subset X$ be compact and $\varphi\in \mathcal{L}_\psi^+$ be a continuous function on $X$ satisfying $\varphi|_K\leq 0$. Then for any entire function $f$ on $X$ satisfying the growth estimates
	\begin{align}
		\label{jafna1}
		\limsup_{r\to\infty}\frac{\log^+\log\|f\|_{\{\varphi\leq \log(r)\}}}{\log(r)}\leq\varrho\;\;\; \text{and}\;\;\;
		\limsup_{r\to\infty}\frac{\log\|f\|_{\{\varphi\leq \log(r)\}}}{r^{\varrho}}\leq\sigma,
	\end{align}
	for some $\varrho>0$, $\sigma\geq0$, we have
	\begin{align}
		\label{jafna3}
		\limsup_{t\to\infty}t^{1/\varrho}(d_K(f,\mathcal{P}^\psi_{t}))^{1/t}\leq (e\sigma\varrho)^{1/\varrho}.
	\end{align}
	If $\psi\in \mathcal{L}_{\tau}$ for some special exhaustion function $\tau$ and we take $\varphi:=V_{K,\psi}$ then the converse holds as well, i.e.\ if $f$ is a function on $K$ and inequality (\ref{jafna3}) holds, then $f$ extends to an entire function on $X$ and inequalities $(\ref{jafna1})$ are true with $\varphi$ replaced by $V_{K,\psi}$.
\end{theorem}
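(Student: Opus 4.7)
Both halves of the theorem will follow Winiarski's classical scheme, with Theorem~\ref{adal0} (together with the quantitative estimates in its proof) taking the place of the polynomial approximation lemma on $\C^N$, and with the Bernstein--Walsh-type bound furnished by $V_{K,\psi}$ replacing its classical counterpart.

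For the forward direction, the first step is to reopen the proof of Theorem~\ref{adal0} (carried out in Section~5) and extract the quantitative estimate
\begin{align*}
d_K(f,\mathcal{P}^\psi_t)\leq C(r,t)\,\|f\|_{\{\varphi\leq \log r\}}\,r^{-t}
\end{align*}
valid for all $r>1$, $t>0$ and every $f\in\mathcal{O}(\{\varphi<\log r\})$ that is bounded there. The integral estimate of Definition~\ref{skil}(ii) combined with the volume growth hypothesis (\ref{volgrowth}) should force $C(r,t)$ to grow at most like $\exp(o(r^\varrho))$ in $r$ and sub-exponentially in $t$. Given such an estimate, fix $\sigma'>\sigma$; by (\ref{jafna1}) we have $\|f\|_{\{\varphi\leq\log r\}}\leq e^{\sigma' r^\varrho}$ for all large $r$, hence
\begin{align*}
d_K(f,\mathcal{P}^\psi_t)\leq C(r,t)\exp\!\bigl(\sigma' r^\varrho-t\log r\bigr).
\end{align*}
The exponent is minimised at $r_*=(t/(\sigma'\varrho))^{1/\varrho}$ with minimum value $-(t/\varrho)\log(t/(e\sigma'\varrho))$. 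Substituting, taking $t$-th roots and multiplying by $t^{1/\varrho}$ yields $t^{1/\varrho}(d_K(f,\mathcal{P}^\psi_t))^{1/t}\leq C(r_*,t)^{1/t}(e\sigma'\varrho)^{1/\varrho}$; since $C(r_*,t)^{1/t}\to 1$, letting $\sigma'\downarrow\sigma$ gives (\ref{jafna3}).

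For the converse direction, set $\alpha:=(e\sigma\varrho)^{1/\varrho}$ and, for each $\epsilon>0$ and all large $t$, pick $p_t\in\mathcal{P}^\psi_t$ with $\epsilon_t:=\|f-p_t\|_K\leq ((\alpha+\epsilon)/t^{1/\varrho})^t$. Writing $f=p_{t_0}+\sum_{t\geq t_0}(p_{t+1}-p_t)$ and observing that for any $p\in\mathcal{P}^\psi_s$ with $\|p\|_K>0$ the function $s^{-1}(\log|p|-\log\|p\|_K)$ belongs to $\mathcal{L}_\psi$ and is non-positive on $K$, the definition of $V_{K,\psi}$ yields the Bernstein--Walsh-type bound
\begin{align*}
|p(z)|\leq\|p\|_K\exp\!\bigl(sV_{K,\psi}(z)\bigr).
\end{align*}
Applied to $q_t:=p_{t+1}-p_t\in\mathcal{P}^\psi_{t+1}$, which satisfies $\|q_t\|_K\leq 2\epsilon_t$, this gives $|q_t(z)|\leq 2\epsilon_t r^{t+1}$ on $\{V_{K,\psi}\leq\log r\}$. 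The series therefore converges uniformly on every sublevel set, so $f$ extends to an entire function on $X$ satisfying
\begin{align*}
\|f\|_{\{V_{K,\psi}\leq\log r\}}\leq \|p_{t_0}\|_{\{V_{K,\psi}\leq\log r\}}+2r\sum_{t\geq t_0}\Bigl(\frac{(\alpha+\epsilon)r}{t^{1/\varrho}}\Bigr)^{\!t}.
\end{align*}
A standard Laplace-type estimate locates the maximal summand near $t_*=((\alpha+\epsilon)r)^\varrho/e$ with value $\exp((\sigma+O(\epsilon))r^\varrho)$; bounding the remaining terms by the usual geometric envelope shows the full sum is of the same order up to a polynomial factor in $r$. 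Taking logarithms and letting $\epsilon\downarrow 0$ recovers both inequalities of (\ref{jafna1}) with $\varphi=V_{K,\psi}$.

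The genuinely technical step is the first one: extracting from the proof of Theorem~\ref{adal0} the explicit dependence on $\|f\|_{\{\varphi\leq\log r\}}$ and showing that the prefactor produced by the integral estimate together with the volume bound (\ref{volgrowth}) contributes at most $\exp(o(r^\varrho))$. The remaining optimisation in $r$ and the Laplace summation are routine manipulations.
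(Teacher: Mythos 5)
Your outline follows the paper's own route: the forward direction is the quantitative estimate of Theorem \ref{adal3} with the sublevel radius optimised at $L(t)\sim(t/(\sigma\varrho))^{1/\varrho}$, and the converse is the standard telescoping series plus the Bernstein--Walsh inequality $|p|\leq\|p\|_Ke^{t V_{K,\psi}}$ and a Laplace estimate for $\sum_n(a/n)^{n/\varrho}$ (the paper's Lemma \ref{summulemma}). The converse half of your argument is complete and correct. The forward half, however, has two genuine gaps. The step you yourself flag as "genuinely technical" --- that the prefactor $C(r,t)$, i.e.\ $M_0\|\bar{\partial}\chi\|_{L^2}$ from Theorem \ref{adal3}, contributes only $\exp(o(r^\varrho))$ --- is not automatic and is precisely the content of the paper's Lemma \ref{kilemma}. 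One must build $\chi$ as $\chi_0(e^{\varphi}/L_1)$, bound $|\bar{\partial}\chi|^2_\omega\,\omega^N\leq\frac{C}{L_2-L_1}\,i\partial\bar{\partial}e^{2\varphi}\wedge\omega^{N-1}$, and then integrate by parts against a second cutoff supported in a $\psi$-shell where $\omega=\frac{i}{2}\partial\bar{\partial}e^{\psi}$, using $\varphi\in\mathcal{L}_\psi^+$ to pass from $\varphi$-sublevel sets to $\psi$-sublevel sets so that (\ref{volgrowth}) (applied with exponent $\varrho/2$, say) can be invoked. Asserting that the integral estimate and the volume bound "should force" this is not a proof; the integration by parts is needed because $e^{2\varphi}$ is not integrable against $\omega^N$ on $X$.

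The second gap is the hypothesis mismatch: Theorem \ref{adal3} requires $\varphi$ to satisfy the Ricci condition $(i)$, which a general continuous $\varphi\in\mathcal{L}_\psi^+$ need not. You propose to "reopen the proof of Theorem \ref{adal0}", but that proof replaces $\varphi$ by $\tilde{\varphi}_T=(1-T^{-1})\varphi+T^{-1}\theta$ with a \emph{fixed} $T$; doing so distorts the sublevel sets multiplicatively in the exponent and would degrade the order from $\varrho$ to $\varrho/(1-T^{-1})$, spoiling the sharp constant $(e\sigma\varrho)^{1/\varrho}$. The paper instead perturbs by $\varphi_\epsilon=(1-\epsilon)\varphi+\epsilon\tilde{\varphi}_T$, verifies $\|f\|_{\{\varphi_\epsilon\leq\log r\}}\leq\|f\|_{\{\varphi\leq\log(r^{1/(1-\epsilon)})\}}$, reruns the argument with order $\varrho/(1-\epsilon)$, and only then lets $\epsilon\to0$. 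Your plan does not engage with this reduction, and without it the forward estimate is only established for those $\varphi$ that already satisfy condition $(i)$ of Theorem \ref{adal3}.
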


Theorems \ref{adal0} and \ref{adal00} are based on a third main result, in which we give an estimate for  $d_K(f,\mathcal{P}^\psi_{t})$ for fixed $t$.
\begin{theorem}\label{adal3}
	Let $\psi$ be a psh exhaustion on $X$ such that $\frac{i}{2}\partial \bar{\partial}e^\psi$ is smooth and strictly positive outside a compact set $S$ and assume $\psi$ induces an integral estimate for holomorphic functions.  Let $K\subset X$ be compact and $\varphi$ be a continuous psh function on $X$ satisfying:
	\begin{enumerate}[(i)]
		\item There is a constant $t_0>0$ such that $t_0 i\partial \bar{\partial}\varphi+\operatorname{Ricci}\left(\frac{i}{2}\partial \bar{\partial} e^\psi\right)\geq 0$ on $X\setminus S$ and $i\partial\bar{\partial}\varphi>0$ on $S$,
		\item $\varphi\in \mathcal{L}_\psi$,
		\item $\varphi|_K\leq 0$.
	\end{enumerate}
	Let $L>1$, $f$ be a function holomorphic on the set $\{z\in X;\; \varphi(z)<\log(L) \}$ and $\epsilon\in ]0,(L-1)/2[$. Then there are constants $l,T_0$ neither depending on $f$ nor $L$ and a constant $M$ not depending on $f$, such that for any $t\geq T_0$ we have
	\begin{align}\label{tyu}
		d_K(f,\mathcal{P}^\psi_{t})\leq M\|f\|_{\{\varphi\leq \log(L-\epsilon/2)\}}\left(\frac{1+\epsilon}{L-\epsilon}\right)^{t-l}.
	\end{align}
	If $L$ is large enough we can write $M=M_0\|\bar{\partial}\chi\|_{L^2(X\setminus S)}$
	where $M_0$ is a constant neither depending on $f$ nor $L$ and $\chi:X\to\R$ is any $C^\infty$ cutoff function with $\chi=1$ on $\{\varphi<\log(L-\epsilon) \}$ and $\chi=0$ on $\{\varphi>\log(L-\epsilon/2)\}$. Here $\|\bar{\partial}\chi\|_{L^2(X\setminus S)}$ denotes the $L^2$ norm of $\bar{\partial}\chi$ on $X\setminus S$ with respect to the measure $\left(\frac{i}{2}\partial \bar{\partial}e^\psi\right)^N$ and the natural norm on $\Lambda^{0,1}T^*_X$ induced by the metric $\frac{i}{2}\partial \bar{\partial}e^\psi$.
\end{theorem}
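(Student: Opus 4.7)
The plan is to follow the classical Bernstein--Walsh--H\"ormander scheme adapted to the geometry $(X,\omega)$ where $\omega:=\tfrac{i}{2}\partial\bar\partial e^\psi$: cut $f$ off inside $\{\varphi<\log L\}$, construct the candidate $\psi$-polynomial in the form $p_t=\chi f-u_t$ by solving a $\bar\partial$-equation with a weighted $L^2$-estimate tailored so that the weight decays rapidly off $K$, and convert the $L^2$-bound into a pointwise one via the integral estimate in Definition~\ref{skil}(ii). Hypothesis (i) is exactly what lets H\"ormander's theorem run with a weight proportional to $\varphi$.

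Concretely I would proceed in four steps. First, fix a cutoff $\chi$ as in the statement and set $g:=\chi f$, so that $v:=\bar\partial g=f\,\bar\partial\chi$ is a smooth $\bar\partial$-closed $(0,1)$-form supported in the annulus $A:=\{\log(L-\epsilon)\le\varphi\le\log(L-\epsilon/2)\}$. Second, for $t\ge T_0$ apply H\"ormander's $L^2$-estimate on $X$ with weight $\phi_t:=2(t-l)\varphi$, using (i) to secure $i\partial\bar\partial\phi_t+\operatorname{Ricci}(\omega)\ge 0$ on $X\setminus S$ as soon as $2(t-l)\ge t_0$ together with a bounded auxiliary correction (absorbed into the fixed constant $l$) that supplies the strict positivity required on $S$; this produces $u_t$ with $\bar\partial u_t=v$ and
\begin{equation*}
\int_X |u_t|^2 e^{-2(t-l)\varphi}\omega^N \;\le\; C_0\int_A |v|^2_\omega\, e^{-2(t-l)\varphi}\omega^N \;\le\; C_0(L-\epsilon)^{-2(t-l)}\|f\|^2_{\{\varphi\le\log(L-\epsilon/2)\}}\|\bar\partial\chi\|^2_{L^2(X\setminus S)}.
\end{equation*}
Third, put $p_t:=g-u_t$; then $p_t\in\mathcal O(X)$ and $\|f-p_t\|_K=\|u_t\|_K$. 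Fourth, apply Definition~\ref{skil}(ii) on geodesic balls $B(z,\delta,\omega)$ for $z\in K$, taking $\delta$ so small (by continuity of $\varphi$ and $\varphi|_K\le 0$) that $\varphi\le\log(1+\epsilon)$ on each such ball; trading the weight $e^{-2(t-l)\varphi}$ for the factor $(1+\epsilon)^{2(t-l)}$ and using that $\psi^+$ is bounded on $K$ yields (\ref{tyu}) with $M=M_0\|\bar\partial\chi\|_{L^2(X\setminus S)}$.

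To establish the remaining claim that $p_t\in\mathcal P^\psi_t$ I would run the same ball-plus-integral-estimate argument at a general $z\in X\setminus S$, using hypothesis (ii) in the form $\varphi\le\psi^++O(1)$ to control the variation of $2(t-l)\varphi$ across $B(z,\delta,\omega)$; this gives a pointwise bound $\log|u_t(z)|\le(t-l+A/2)\psi^+(z)+O(1)$, so with $l$ fixed large enough (independently of $f$, $L$ and $t$) the $\psi$-degree of $p_t$ is at most $t$. The main obstacle is not solvability of the $\bar\partial$-equation, which is by now standard, but the bookkeeping of exponents: one must balance the $A\psi^+$-loss from the integral estimate, the spread of $\varphi$ over a geodesic ball controlled by $\varphi\in\mathcal L_\psi$, and the fact that (i) delivers only $\ge 0$ curvature away from $S$ rather than strict positivity. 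The factor $2$ in the weight $2(t-l)\varphi$ is chosen precisely so that taking a square root at the end yields the claimed decay $((L-\epsilon)^{-1})^{t-l}$ and a $\psi^+$-growth rate strictly below $t\psi^+$. Once $l$, $T_0$ and $M_0$ are pinned down, the sharpening $M=M_0\|\bar\partial\chi\|_{L^2(X\setminus S)}$ for large $L$ is automatic, since the remaining additive bounded contributions are then dominated by $(L-\epsilon)^{-t}$.
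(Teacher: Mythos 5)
Your overall architecture matches the paper's: cut off $f$ inside $\{\varphi<\log L\}$, solve $\bar{\partial}u_t=f\bar{\partial}\chi$ with weight $e^{-2t\varphi}$, set $p_t=\chi f-u_t$, and use the integral estimate of Definition \ref{skil}(ii) on geodesic balls both to bound $\|u_t\|_K$ and to control the $\psi$-degree of $p_t$. But there is a genuine gap at the central analytic step. Your claimed inequality
\begin{equation*}
\int_X |u_t|^2 e^{-2(t-l)\varphi}\omega^N \;\le\; C_0\int_A |v|^2_\omega\, e^{-2(t-l)\varphi}\omega^N
\end{equation*}
does not follow from H\"ormander's theorem under hypothesis (i). The H\"ormander--Demailly estimate bounds $\int|u|^2e^{-\phi}$ by $\int\langle A_\phi^{-1}v,v\rangle e^{-\phi}$, where $A_\phi$ is built from $i\partial\bar{\partial}\phi+\operatorname{Ricci}(\omega)$; to replace $\langle A_\phi^{-1}v,v\rangle$ by $C_0|v|^2_\omega$ you need that curvature form to be bounded below by a positive multiple of $\omega$ on the support of $v$. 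Hypothesis (i) gives only semi-positivity on $X\setminus S$ — in particular on the annulus $A$ where $v$ lives — so $\langle A_\phi^{-1}v,v\rangle$ may be infinite there. Your ``bounded auxiliary correction'' only addresses strict positivity near $S$, where (i) already gives $i\partial\bar{\partial}\varphi>0$; it does not supply the missing positivity on $A$, and no bounded psh correction can do so uniformly on a noncompact Stein manifold.

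The paper's resolution (its Theorem \ref{hormander}, after Skoda) is to twist the trivial line bundle by the metric $(1+e^\psi)^{-2}$, whose curvature contributes $\ge 4\omega(1+e^\psi)^{-2}$ on $X\setminus S$. This restores a strictly positive (though decaying) curvature lower bound everywhere outside $S$, at the price of the extra weight $(e^\psi+1)^{-2}$ on the left-hand side of the $L^2$ estimate. That weight is then paid for in the pointwise step: Lemma \ref{l52} shows $e^{\psi/2}$ oscillates by at most $1/2$ over a unit geodesic ball, so $\sup_{B(z,1,\omega)}(e^\psi+1)^2$ costs only a fixed additive amount in the exponent, which is exactly where the constant $l=\tfrac{A}{2}+1$ comes from. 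Without this device (or an equivalent one) neither the bound $p_t\in\mathcal{P}^\psi_{t+l}$ with $l$ independent of $L$ and $t$, nor the uniformity of $M_0$ in $L$, actually follows from your argument. The rest of your outline — the choice of $\delta$ so that $\varphi\le\log(1+\epsilon)$ on balls around $K$, the use of $\varphi\in\mathcal{L}_\psi$ to control the degree at general $z$, and the relabeling $t\mapsto t-l$ — is sound and coincides with the paper's.
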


\section{Examples}
We start this section by proving two propositions. We then apply them to construct a few examples of psh exhaustion functions $\psi$ on Stein manifolds which are Ricci compensable and induce an integral estimate for holomorphic functions.

\begin{proposition}\label{prop25}
	Let $\psi$ be a psh exhaustion on $X$ such that $\frac{i}{2}\partial \bar{\partial}e^\psi$ is smooth and strictly positive outside a compact set $S$. If there exist functions $\epsilon,M:X\to \R_+$ and constants $A,B>0$ satisfying
	\begin{align*}
		\epsilon(z)\geq e^{-(A\psi^+(z)+B)}\;\;\text{and}\;\;  M(z)\leq e^{A\psi^+(z)+B},\qquad z\in X,
	\end{align*}
	such that for every $z\in X\setminus S$, there is a coordinate patch $\xi: B(\zeta,\epsilon(z))\to X$ with $\xi(\zeta)=z$, $\xi^*\left(\frac{i}{2}\partial \bar{\partial}e^\psi\right)\leq M(z)\omega_0$ and $\omega_0^N\leq M(z)\left(\xi^*\frac{i}{2}\partial \bar{\partial}e^\psi\right)^N$ on $B(\zeta,\epsilon(z))$, then $\psi$ induces an integral estimate for holomorphic functions. 
\end{proposition}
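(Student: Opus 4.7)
The plan is to localize in the Euclidean coordinate patch $\xi$ supplied by the hypothesis, apply the Euclidean sub-mean-value inequality to $|\xi^*F|^2$ on a Euclidean ball whose radius is tuned to both $\epsilon(z)$ and $M(z)$, and then transport the resulting estimate back to $X$ via the two pointwise form comparisons on the chart.

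Fix $\delta>0$ and $z\in X\setminus S$, and let $\xi:B(\zeta,\epsilon(z))\to X$ be the given chart with $\xi(\zeta)=z$. The bound $\xi^*(\tfrac{i}{2}\partial\bar\partial e^\psi)\leq M(z)\omega_0$ forces the Riemannian length in $\tfrac{i}{2}\partial\bar\partial e^\psi$ of any tangent vector to be at most $c\sqrt{M(z)}$ times its Euclidean length, for a universal $c$; integrating along radial paths from $\zeta$ then gives
\[
\xi\bigl(B(\zeta,r)\bigr)\subset B\bigl(z,\,c\sqrt{M(z)}\,r,\,\tfrac{i}{2}\partial\bar\partial e^\psi\bigr)\qquad\text{for every }0<r\leq\epsilon(z).
\]
I would therefore set $r(z):=\min\bigl\{\epsilon(z),\,\delta/(c\sqrt{M(z)})\bigr\}$; by the hypotheses on $\epsilon$ and $M$, this satisfies $r(z)\geq e^{-(A_1\psi^+(z)+B_1)}$ for constants $A_1,B_1$ depending only on $A,B,\delta$, and by construction $\xi(B(\zeta,r(z)))\subset B(z,\delta,\tfrac{i}{2}\partial\bar\partial e^\psi)$. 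The image automatically lies in $X\setminus S$, since the pointwise pullback comparisons in the hypothesis already require $\tfrac{i}{2}\partial\bar\partial e^\psi$ to be smooth and strictly positive on $\xi(B(\zeta,\epsilon(z)))$.

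Since $F$ is holomorphic on $\overline{B}(z,\delta,\tfrac{i}{2}\partial\bar\partial e^\psi)$, the pullback $\xi^*F$ is holomorphic on $B(\zeta,r(z))$ and $|\xi^*F|^2$ is Euclidean-subharmonic. Combining the Euclidean sub-mean-value inequality, the volume comparison $\omega_0^N\leq M(z)\bigl(\xi^*\tfrac{i}{2}\partial\bar\partial e^\psi\bigr)^N$, and the change of variables under $\xi$ yields
\[
|F(z)|^2=|\xi^*F(\zeta)|^2\leq \frac{C_N\,M(z)}{r(z)^{2N}}\int_{B(z,\delta,\tfrac{i}{2}\partial\bar\partial e^\psi)\setminus S}|F|^2\bigl(\tfrac{i}{2}\partial\bar\partial e^\psi\bigr)^N,
\]
with $C_N$ a purely dimensional constant. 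Inserting the bounds on $M(z)$ and on $r(z)^{-1}$, the prefactor is $\leq e^{A'\psi^+(z)+B'}$ for constants $A',B'$, which is exactly the estimate required by Definition~\ref{skil}(ii). The only nontrivial bookkeeping is the Riemannian-versus-Euclidean length comparison leading to the lower bound on $r(z)$; once that is in place, the rest is a routine localization of the sub-mean-value inequality for plurisubharmonic functions.
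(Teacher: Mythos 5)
Your proposal is correct and follows essentially the same route as the paper's proof: shrink to a Euclidean ball in the chart whose radius is controlled by $\epsilon(z)$ and $M(z)$ so that its image lies in $B(z,\delta,\tfrac{i}{2}\partial\bar{\partial}e^\psi)$, apply the Euclidean sub-mean-value inequality to $|F\circ\xi|^2$, and convert back using the volume comparison $\omega_0^N\leq M(z)(\xi^*\tfrac{i}{2}\partial\bar{\partial}e^\psi)^N$ together with the growth bounds on $\epsilon$ and $M$. The only difference is cosmetic: the paper takes $r(z)=\min\{1,\delta\}\min\{\epsilon(z),M(z)^{-1}\}$ where you take $r(z)=\min\{\epsilon(z),\delta/(c\sqrt{M(z)})\}$, both of which yield a prefactor of the required form $e^{A'\psi^+(z)+B'}$.
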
 
\begin{proof}
	Let $\delta>0$ and write $r(z)=\delta_1 \min\{\epsilon(z),(M(z))^{-1} \}$ where $\delta_1=\min\{1,\delta\}$. Since  $\xi^*(\frac{i}{2}\partial \bar{\partial}e^\psi)\leq M(z)\omega_0$ on $B(0,\epsilon(z))$ we have
	\begin{align*}
		\xi(B(0,r(z)))\subset B(z,\delta_1,\tfrac{i}{2}\partial \bar{\partial}e^\psi)\subset B(z,\delta,\tfrac{i}{2}\partial \bar{\partial}e^\psi).
	\end{align*}
	Let $F$ be a function holomorphic in a neighborhood of the ball $\overline{B}(z,\delta,\frac{i}{2}\partial \bar{\partial}e^\psi)$ and denote by $v_{2N}$ the volume of the unit ball of dimension $2N$. Then by the sub-mean-value inequality on $\C^N$ we have
	\begin{align*}
		|F(z)|^2&=|F\circ \xi(0)|^2\leq \frac{1}{v_{2N}(r(z))^{2N}}\int_{B(0,r(z))} |F\circ \xi|^2\omega_0^N\\
		&\leq \frac{\max\{\epsilon^{-1}(z),M(z) \}^{2N}}{v_{2N}\delta_1^{2N}}\int_{B(0,r(z))} |F\circ \xi|^2M(z)\left(\xi^*\tfrac{i}{2}\partial \bar{\partial}e^\psi\right)^N\\
		&\leq \frac{\max\{\epsilon^{-1}(z),M(z) \}^{2N}M(z)}{v_{2N}\delta_1^{2N}}\int_{B(z,\delta,\tfrac{i}{2}\partial \bar{\partial}e^\psi)}F(\tfrac{i}{2}\partial \bar{\partial}e^\psi)^N.
	\end{align*}
	By assumption of the growth rate of $\epsilon$ and $M$ we get the result.
\end{proof}

\begin{proposition}\label{prop35}
	Assume $\psi$ is an exhaustion function of the form
	\begin{align*}
		\psi(z)=\log(|g_1(z)|^2+...+|g_m(z)|^2),\qquad z\in X,
	\end{align*}
	for some holomorphic functions $g_1,...,g_m\in \mathcal{O}(X)$.
	\begin{enumerate}[(i)]
		\item The Ricci curvature of $\frac{i}{2}\partial \bar{\partial}e^{\psi}$ is given with
		\begin{align*}
			\operatorname{Ricci}\big(\tfrac{i}{2}\partial \bar{\partial}e^{\psi}\big)= -i\partial \bar{\partial}\log\left( \sum_{1\leq j_1<...< j_N\leq m}|\Det \left(\operatorname{Jac}(g_{j_1},...,g_{j_N})\right)|^2\right).
		\end{align*}
		where the sum is taken over every subcollection $\{j_1\leq...\leq j_N\}\subset \{1,...,m\}$ of size $N$ and $\operatorname{Jac}(g_{j_1},...,g_{j_N})$ is the Jacobian of $g_{j_1},...,g_{j_N}$ with respect to any local coordinate system.
		\medskip
		
		\item If there exist constants $A,B>0$ such that for every $z\in X$ there exists a subcollection $\{g_{j_1},...,g_{j_N}\}\subset \{g_1,...,g_m\}$ and a neighborhood $V$ of $z$ such that $(g_{j_1},...,g_{j_N})$ maps $V$ bijectively to an open ball in $\C^N$ of radius $\epsilon(z)\geq e^{-(A\psi^+(z)+B)}$ and such that for every $g_k\in \{g_1,...,g_m\}\setminus \{g_{j_1},...,g_{j_N}\}$ we have 
		\begin{align}
			i\partial \bar{\partial}|g_k|^2\leq e^{A\psi^++B}i\partial\bar{\partial}\sum_{s=1}^{N}|g_{j_s}|^2\qquad \text{on}\;\; V,\label{wwww}
		\end{align}
		then $\psi$ induces an integral estimate for holomorphic functions.
		
	\end{enumerate}
\end{proposition}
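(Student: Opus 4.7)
For part (i), the plan is a direct coordinate computation followed by Cauchy--Binet. Pick local holomorphic coordinates $(z_1,\dots,z_N)$ near a point. Since $e^\psi=\sum_j|g_j|^2$ and $\partial\bar\partial|g_j|^2=\partial g_j\wedge\bar\partial\bar g_j$, the matrix of coefficients of $\frac{i}{2}\partial\bar\partial e^\psi$ is
\begin{align*}
  H_{k,l}=\sum_{j=1}^m\frac{\partial g_j}{\partial z_k}\,\overline{\frac{\partial g_j}{\partial z_l}},
\end{align*}
i.e.\ $H=JJ^*$ where $J=\big(\partial g_j/\partial z_k\big)_{j,k}$ is the $m\times N$ Jacobian. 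The Cauchy--Binet formula then gives
\begin{align*}
  \Det(H)=\sum_{1\leq j_1<\cdots<j_N\leq m}|\Det(\operatorname{Jac}(g_{j_1},\dots,g_{j_N}))|^2,
\end{align*}
and taking $-i\partial\bar\partial\log$ of both sides yields the claimed formula for the Ricci curvature. Note that $-i\partial\bar\partial\log$ applied to this expression is coordinate-independent (different coordinate changes only modify the sum by an additive constant of the form $\log|\det(\partial z/\partial z')|^2$ whose $\bar\partial\partial$ vanishes), so the stated identity is globally well defined.

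For part (ii), the plan is to verify the hypotheses of Proposition~\ref{prop25} using the map $(g_{j_1},\dots,g_{j_N})$ supplied by assumption as a coordinate chart. Fix $z\in X\setminus S$, let $(g_{j_1},\dots,g_{j_N})$ be the corresponding $N$-tuple, and let $\xi$ be the inverse of the biholomorphism $V\to B(w,\epsilon(z))$ centered at $w=(g_{j_1}(z),\dots,g_{j_N}(z))$, translated so that $w=0$ corresponds to $z$. In these $w$-coordinates, $\xi^*\bigl(\tfrac{i}{2}\partial\bar\partial|g_{j_s}|^2\bigr)=\tfrac{i}{2}dw_s\wedge d\bar w_s$, so
\begin{align*}
  \sum_{s=1}^N \xi^*\bigl(\tfrac{i}{2}\partial\bar\partial|g_{j_s}|^2\bigr)=\omega_0.
\end{align*}
Splitting $\xi^*\bigl(\tfrac{i}{2}\partial\bar\partial e^\psi\bigr)=\omega_0+\sum_{k\notin\{j_1,\dots,j_N\}}\xi^*\bigl(\tfrac{i}{2}\partial\bar\partial|g_k|^2\bigr)$ and using the hypothesis (\ref{wwww}) on each remaining term, I obtain
\begin{align*}
  \omega_0\leq \xi^*\bigl(\tfrac{i}{2}\partial\bar\partial e^\psi\bigr)\leq \bigl(1+(m-N)e^{A\psi^+(z)+B}\bigr)\omega_0.
\end{align*}
The lower bound gives $\omega_0^N\leq\bigl(\xi^*\tfrac{i}{2}\partial\bar\partial e^\psi\bigr)^N$, and the upper bound gives $\xi^*\bigl(\tfrac{i}{2}\partial\bar\partial e^\psi\bigr)\leq M(z)\omega_0$ with $M(z)=1+(m-N)e^{A\psi^+(z)+B}\leq e^{A'\psi^+(z)+B'}$ after enlarging the constants. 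Combined with the hypothesis $\epsilon(z)\geq e^{-(A\psi^+(z)+B)}$, both estimates required by Proposition~\ref{prop25} hold, and the conclusion follows.

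The only mildly subtle point is to be careful that the upper bound (\ref{wwww}) is assumed as a pointwise inequality of $(1,1)$-forms and therefore transfers to the pullback under $\xi$ without change; the rest is just matching constants to the template of Proposition~\ref{prop25}. The Cauchy--Binet step in (i) is the real content, while (ii) is essentially a clean application of the previously established criterion once one sees that the coordinates furnished by the distinguished $N$-tuple $(g_{j_1},\dots,g_{j_N})$ automatically reproduce the standard Euclidean form from $N$ of the $m$ summands.
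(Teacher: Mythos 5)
Your proof follows essentially the same route as the paper: part (i) is the identical Cauchy--Binet computation of $\Det\bigl(\sum_r \partial_j g_r\,\overline{\partial_k g_r}\bigr)$, and part (ii) is the same reduction to Proposition~\ref{prop25} via the chart $\xi$ inverse to $z\mapsto(g_{j_1}(z),\dots,g_{j_N}(z))$. You actually spell out the verification of the two metric comparisons in (ii) (which the paper dispatches in one sentence), and your splitting $\xi^*\bigl(\tfrac{i}{2}\partial\bar\partial e^\psi\bigr)=\omega_0+\sum_{k\notin\{j_1,\dots,j_N\}}\xi^*\bigl(\tfrac{i}{2}\partial\bar\partial|g_k|^2\bigr)$ is exactly the intended argument.
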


\begin{proof}
	$(i)$ Let $(z_1,...,z_N)$ be some local coordinate chart. We have
	\begin{align*}
		i\partial\bar{\partial}e^{\psi}=\sum_{j,k,r}\frac{\partial g_r}{\partial z_j}\overline{\frac{\partial g_r}{\partial z_k}}dz_j\wedge d\bar{z}_k
	\end{align*} 
	and therefore, by the Cauchy-Binet formula, we have
	\begin{align*}
		(i\partial \bar{\partial}e^{\psi})^N&=\Det\left(\sum_{r}\frac{\partial g_r}{\partial z_j}\overline{\frac{\partial g_r}{\partial z_k}} \right)_{j,k}dV\\
		&=\left(\sum_{1\leq j_1<...< j_N\leq m}|\Det \left(\operatorname{Jac}(f_{j_1},...,f_{j_N})\right)|^2\right)dV.
	\end{align*}
	The result now follows from definition of the Ricci curvature.
	
	$(ii)$ This is just a special case of Proposition \ref{prop25} where the coordinate patch $\xi$ is defined as the inverse of the map $z\to(g_{j_1}(z),...,g_{j_N}(z))$.
\end{proof}

Now we apply propositions \ref{prop25} and \ref{prop35} to construct a few examples where our main results can be applied.

\textbf{\large{Polynomials in $\C^N$}.} Let $X=\C^N$ and let $g_1,...,g_m$ be polynomials on $\C^N$ s.t.\
\begin{align*}
	\psi(z):=\log(|g_1(z)|^2+...+|g_m(z)|^2),\qquad z\in \C^N
\end{align*}
is an exhaustion function. Further assume that the Jacobian of the map $z\to (g_1(z),...,g_m(z))$ has full rank on $\C^N$. Then $\psi$ is Ricci compensable and induces an integral estimate for holomorphic functions. We do not prove this here since this is a special case of our next example.

\textbf{\large{Affine algebraic manifolds}.} Let $X\subset \C^M$ be a non-singular algebraic manifold of dimension $N$ and let $g_1,...,g_m$ be polynomials on $X$ (i.e.\ each $g_j$ is the restriction of a polynomial on $\C^M$ to $X$). Assume that the function
\begin{align*}
	\psi(z)=\log(|g_1(z)|^2+...+|g_m(z)|^2),\qquad z\in X
\end{align*}
is an exhaustion function, and further assume that the Jacobian of the map $X\to \C^m$, $z\to (g_1(z),...,g_m(z))$ has full rank on $X$. Then $\psi$ is Ricci compensable and induces an integral estimate for holomorphic functions. By Rudin \cite{Rud:1968}, after a linear change of variables, we can assume that $X$ is a subset of
\begin{align*}
	\{z=(z_1,...,z_N,z_{N+1},...,z_M)=(z',z'')\in \C^M;\; \|z''\|\leq A(1+\|z'\|)^{B} \}
\end{align*}
for some positive constants $A,B$. This implies that the function $\tau:=\log\|z'\|$ is a special exhaustion function on $X$. Since $\psi\in \mathcal{L}_{C\tau}$ for $C>0$ large enough, we see that the polynomial spaces $\mathcal{P}^\psi_{t}$ have finite dimension.

We now prove that $\psi$ is Ricci compensable and induces an integral estimate for holomorphic functions. Our method is based on Demailly's calculations from the proof of \cite[Proposition 10.1]{Dem:1985}. Indeed, we generalize this result by calculating the Ricci curvature of $\frac{i}{2}\partial \bar{\partial}e^\psi$. 
	
	Let $P_1,...,P_r$ be generators of the ideal $I(X)$ of polynomials in $\C^M$ vanishing on $X$ and let $s=M-N$ be the codimension of $X$. For each $K=\{k_1<...<k_s\}\subset \{1,...,r\}$ and each $L=\{l_1<...<l_N \}\subset \{1,...,m \}$ denote by $J_{K,L}$ the determinant of the Jacobian of the functions $g_{l_1},...,g_{l_N},P_{k_1},...,P_{k_s}$ on $\C^M$. Further write
	\begin{align*}
		U_K:=X\cap \{z\in \C^M:\; dP_{k_1}\wedge...\wedge dP_{k_s}(z)\not=0 \}.
	\end{align*}
	The sets $U_K$ form an open cover of $X$ since it is non-singular. Denote by $(z_1,...,z_M)$ the standard coordinates on $\C^M$, write $T_0:=\{N+1,N+2,...,M \}$ and let $w\in U_K$ be fixed. Without loss of generality we can assume that $X$ can be parameterized in the variables $(z_1,...,z_N)$ in a neighborhood of $w$. That means that in a neighborhood of $w$ in $\C^M$ we have
	\begin{align*}
		|D_{K,T_{0}}|^2:=\left|\Det\left(\tfrac{\partial P_{k}}{\partial z_t } \right)_{k\in K, t\in T_0 } \right|^2\not=0
	\end{align*}
	and therefore
	\begin{align}
		&dg_{l_1}\wedge d\bar{g}_{l_1}\wedge...\wedge dg_{l_N}\wedge d\bar{g}_{l_N}\wedge dP_{k_1}\wedge d\overline{P}_{k_1}\wedge...\wedge dP_{k_s}\wedge d\overline{P}_{k_s}\nonumber\\
		=&|J_{K,L}|^2dz_1\wedge d\bar{z}_1\wedge...\wedge dz_{M}\wedge d\bar{z}_M\label{xxxx}\\
		=&\frac{|J_{K,L}|^2}{|D_{K,T_0}|^2}dz_1\wedge d\bar{z}_1\wedge...\wedge dz_N\wedge d\bar{z}_N\wedge dP_{k_1}\wedge d\overline{P}_{k_1}\wedge...\wedge dP_{k_s}\wedge d\overline{P}_{k_s}.\nonumber
	\end{align} 
	Since the gradients $\nabla P_k$ are orthogonal to the tangent space of $X$ we see that when we restrict the forms from equation $(\ref{xxxx})$ to the submanifold $X$ we have
	\begin{align*}
		dg_{l_1}\wedge d\bar{g}_{l_1}\wedge...\wedge dg_{l_N}\wedge d\bar{g}_{l_N}=\frac{|J_{K,L}|^2}{|D_{K,T_0}|^2}dz_1\wedge d\bar{z}_1\wedge...\wedge dz_N\wedge d\bar{z}_N.
	\end{align*}
	Now by applying Proposition \ref{prop35} $(i)$, and by noticing that the function $\log|D_{K,T_0}|^2$ is pluriharmonic in a neighborhood of $w$, we see that
	\begin{align}
		\operatorname{Ricci}\left(\tfrac{i}{2}\partial\bar{\partial}e^{\psi}\right)=-i\partial\bar{\partial}\log \sum_{|L|=N}|J_{K,L}|^2,\qquad \text{on}\;\; U_K.\label{ricci1}
	\end{align}
	Now let $K_0$ be fixed. For any $K\not= K_0$ the function
	\begin{align*}
		a_{K,K_0}:=\log \sum_{|L|=N}|J_{K,L}|^2-\log \sum_{|L|=N}|J_{K_0,L}|^2
	\end{align*}
	is pluriharmonic on $U_K\cap U_{K_0}$ since it is the difference of two local potentials of the Ricci curvature. Moreover, this function is locally bounded from above on $U_{K_0}$ and since $U_{K_0}\setminus U_K$ is an analytic subset of $U_{K_0}$ the function $a_{K,K_0}$ is psh on $U_{K_0}$. This is true for all $K$ so the function
	\begin{align}
		\log \sum_{|K|=s}e^{a_{K,K_0}}=\log \sum_{|K|=s,|L|=N}|J_{K,L}|^2-\log \sum_{|L|=N}|J_{K_0,L}|^2\label{ricci2}
	\end{align}
	is plurisubharmonic on $U_{K_0}$. Now define the function
	\begin{align*}
		\theta=\log \sum_{|K|=s,|L|=N} |J_{K,L}|^2,\qquad \text{on}\;\; X.
	\end{align*}
	By equations (\ref{ricci1}) and (\ref{ricci2}) we see that $i\partial\bar{\partial}\theta+\operatorname{Ricci}(\frac{i}{2}\partial\bar{\partial}e^{\psi})\geq 0$ on $X$. Moreover, since the Jacobian of $g_1,...,g_m$ has full rank on $X$, the functions $|J_{K,L}|^2$ never vanish at the same time on $X$, i.e.\ we have $\theta>-\infty$ on $X$. By a simple application of Hilbert's Nullstellensatz we can see that there exist constants $A$ and $B$ such that $|\theta|\leq A\psi^++B$ on $X$ and therefore $\theta$ is a Ricci compensator for $\psi$. 
	
	By applying Hilbert's Nullstellensatz again we can find constants $A_1,B_1$ such that for each $z\in X$ we can find $K=\{k_1,...,k_s \}$ and $L=\{l_1,...,l_N \}$ such that 
	\begin{align*}
		|J_{K,L}(z)|^2\geq e^{-(A_1\psi^+(z)+B_1)}.
	\end{align*}
	Since the derivatives of $|J_{K,L}|^2$ have polynomial growth it is simple to show that $(g_{l_1},...,g_{l_N},P_{k_1},...,P_{k_s})$ maps a neighborhood of $z$ in $\C^M$ bijectively to an open ball in $\C^M$ of radius $\epsilon(z)\geq e^{-(A_2\psi^+(z)+B_2)}$ where $A_2,B_2$ are constants independent of $z$. Since the functions $P_{k_1},...,P_{k_s}$ vanish on $X$ we see that $(g_{l_1},...,g_{l_N})$ maps a neighborhood of $z$ in $X$ to an open ball in $\C^N$ of radius $\epsilon(z)$. Since the functions $g_{j}$ are polynomials it is easy to see that inequality $(\ref{wwww})$ is satisfied for some $A,B$.

\textbf{\large{The complex torus}.} Let $X=\C^N/\Z^N$ be the complex torus and
\begin{align*}
	\psi(z)=\|\operatorname{Im}(z)\|=\|y\|,\qquad z=x+iy\in X.
\end{align*}
Then $\psi$ is Ricci compensable and induces an integral estimate for holomorphic functions. The function $\psi$ is itself a special exhaustion function so the polynomial spaces $\mathcal{P}^\psi_{t}$ have finite dimension. Indeed the functions
\begin{align}
	\xi_a(z):=e^{2\pi i\langle z,a\rangle},\qquad a\in \Z^N,\;\; \|a\|\leq \tfrac{t}{2\pi},\;\; z\in X,\label{fourier}
\end{align}
form a basis for $\mathcal{P}^\psi_{t}$ for every $t$. In this case we can apply our main theorems to prove classical results from Fourier analysis.

	We now prove these statements. It is simple to show that $(i\partial \bar{\partial}\|y\|)^N=0$ if $y\not=0$ so $\psi$ is a special exhaustion function. Now suppose $p\in \mathcal{P}_t^\psi$ for some $t$. Since $p$ is periodic it is of the form $p(z)=\sum_{a\in \Z^N} c_a e^{2\pi i \langle z,a\rangle}$ for some constants $c_a$. By the Paley-Wiener theorem we see that $p$ is the Fourier-transform of a distribution with support on $B(0,t)\subset \R^N$. Therefore we have $c_a=0$ if $\|a\|>\frac{t}{2\pi}$ and the functions from (\ref{fourier}) form a basis for $\mathcal{P}_t^\psi$.

 	If $\|y\|\geq 1$, then the largest eigenvalue of the metric
	\begin{align*}
		\frac{i}{2}\partial \bar{\partial}e^{\psi}=\frac{ie^\psi}{8}\sum_{1\leq j,k\leq N}\left(\frac{\delta_{j,k}}{\|y\|}+\frac{y_jy_k(\|y\|-1)}{\|y\|^3} \right) dz_j\wedge d\bar{z}_k
	\end{align*}
	is $\lambda_1=e^\psi/4$ and corresponds to the eigenvector $y$. Therefore we can apply Proposition \ref{prop25} with $\epsilon(z)= \frac{1}{2}$ (we can map the torus $X$ to a strip in $\C^N$ of width one centered at $z$) and $M(z)=\frac{1}{4}e^{\psi(z)+\frac{1}{2}}$ so $\psi$ induces an integral estimate for holomorphic functions. The metric $\frac{i}{2}\partial \bar{\partial}e^{\psi}$ has one more eigenvalue $\lambda_2=\frac{e^\psi}{4\|y\|}$ which corresponds to the $(N-1)$-dimensional eigenspace of vectors perpendicular to $y$. Therefore we have
	\begin{align*}
		\operatorname{Ricci}\left(\tfrac{i}{2}\partial\bar{\partial}e^{\psi} \right)=-i\partial \bar{\partial}\log (\lambda_1\lambda_2^{N-1})=-Ni\partial \bar{\partial}\psi+(N-1)i\partial \bar{\partial}\log \psi.
	\end{align*}
	As a Ricci compensator we can take the function
	\begin{align*}
		\theta=v+\begin{cases}
			N\psi-(N-1)\log \psi,\qquad \text{if}\; \psi\geq 2\\
			2N-(N-1)\log 2,\qquad \text{if}\; \psi<2
		\end{cases}
	\end{align*}
where $v\in \mathcal{L}_{\psi}$ is any function which is strictly psh on \{$\psi<2\}$. It is a simple exercise to show that $\theta$ is indeed psh.

\textbf{The complement of a graph of a holomorphic function.} Let $f:\C^{N-1}\to \C$ be a holomorphic function in $(N-1)$ variables and define
\begin{align*}
	F(z):=z_N-f(z'),\qquad z=(z',z_N)\in \C^N.
\end{align*}
Let $X=\C^N\setminus \{F=0 \}$ and
\begin{align*}
	\psi(z):=\log\left(\|z'\|^2+|F(z)|^2+|F^{-1}(z)|^2\right),\qquad z\in X.
\end{align*}
The function $\psi$ is Ricci compensable and induces an integral estimate for holomorphic functions. In \cite[Theorem 4.1]{Ayt:2014} we see that $\tau:=\log(\|z'\|+|F(z)-1|^2)-\log|F(z)|$ is a special exhaustion function. It is easy to see that $\psi\in\mathcal{L}_{C\tau}$ for $C>0$ large enough and therefore the polynomial spaces $\mathcal{P}^\psi_{t}$ have finite dimension.

	To prove these statements we first observe that applying Proposition \ref{prop35} $(i)$ gives
		\begin{align*}
			\operatorname{Ricci}\left(\tfrac{i}{2}\partial\bar{\partial}e^{\psi} \right)=-i\partial\bar{\partial}\log(1+|F|^{-2})
		\end{align*}
	so we can take $\theta=\log(1+|F|^{-2})$ as a Ricci compensator. The map $z\to (z',F^{-1}(z))$ is a bijection from $X$ to $\C^{N-1}\times\C^*$ so we can take $\{z',F^{-1} \}$ as the subcollection mentioned in Proposition \ref{prop35} $(ii)$ and  $\epsilon(z)=|F(z)|^{-1}$. We just have to check that (\ref{wwww}) from Proposition \ref{prop35} is satisfied. Indeed we have  
	\begin{align*}
		i\partial \bar{\partial}|F|^2= |F|^{4}i\partial \bar{\partial}|F|^{-2}\qquad \text{on}\;\; X
	\end{align*}
 	so $\psi$ induces an integral estimate for holomorphic functions.
\section{Proofs}
In this section we prove all results from Section 3. Recall that $X$ is always a Stein manifold and $\psi$ is a psh exhaustion function on $X$. We start with auxiliary propositions.
\begin{proposition}\label{key}
	Assume that $\frac{i}{2}\partial \bar{\partial}e^\psi$ is smooth and strictly positive outside a compact subset $S$ of $X$. Then there exists a Kähler form $\omega$ on $X$ such that $\omega=\frac{i}{2}\partial \bar{\partial}e^\psi$ outside a compact subset of $X$. Moreover, the metric $\omega$ is complete.
\end{proposition}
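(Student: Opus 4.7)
The plan is to construct $\omega = \frac{i}{2}\partial\bar\partial u$ for a smooth strictly psh function $u\colon X\to\R$ agreeing with $e^\psi$ outside a compact set, and then establish completeness by a direct gradient computation. First I observe that, since $\frac{i}{2}\partial\bar\partial e^\psi$ is smooth and positive on $X\setminus S$, elliptic regularity for the scalar $\partial\bar\partial$ operator implies that $e^\psi$ itself is smooth on $X\setminus S$; hence $e^\psi$ is continuous psh on $X$ and smooth strictly psh on $X\setminus S$. Since $X$ is Stein, I fix a smooth strictly psh exhaustion $\sigma\geq 1$ on $X$ with Kähler form $\omega_0:=\frac{i}{2}\partial\bar\partial\sigma$.

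Next, choose an open neighborhood $U\supset S$ with $\bar U$ compact, pick $a$ so that $\bar U\subset\{\sigma\leq a\}$, fix $b>a$ to be chosen large, and take a smooth extension $\tilde f\in C^\infty(X)$ of $e^\psi|_{X\setminus U}$ across $\bar U$. Let $\chi=f(\sigma)$ where $f\colon\R\to[0,1]$ is smooth decreasing with $f\equiv 1$ on $(-\infty,a]$ and $f\equiv 0$ on $[b,\infty)$. Set
\[
u:=\tilde f + A\chi\sigma,\qquad \omega:=\tfrac{i}{2}\partial\bar\partial u.
\]
Then $\omega=\alpha:=\frac{i}{2}\partial\bar\partial e^\psi$ outside $\{\sigma<b\}$, and on $\bar U$ one has $\omega=\frac{i}{2}\partial\bar\partial \tilde f + A\omega_0$, which is strictly positive for $A$ exceeding a threshold depending only on $\tilde f$. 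The main obstacle is positivity on the transition annulus $\{a<\sigma<b\}\subset X\setminus S$: writing $\chi\sigma=g(\sigma)$ with $g(t)=tf(t)$ gives
\[
\omega=\alpha+Ag''(\sigma)\tfrac{i}{2}\partial\sigma\wedge\bar\partial\sigma+Ag'(\sigma)\omega_0,
\]
and since $\alpha\geq m\omega_0$ on the compact transition for some $m>0$ (by smoothness and strict positivity of $\alpha$ there), a careful choice of $f$ (which makes $|g'|$ small in the bulk of $[a,b]$ by spreading the transition over a large interval) together with an appropriate range for $A$ absorbs the error into $\alpha$, yielding positivity throughout.

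For completeness I apply the standard criterion that a Riemannian manifold is complete whenever it admits a smooth proper function with bounded gradient. Set $\phi:=e^{\psi/2}$, smooth on $X\setminus S$ and hence outside the compact set $K\supset S$ on which $\omega$ may differ from $\alpha$. In local coordinates the metric coefficients of $\omega$ are $\omega_{j\bar k}=e^\psi(\psi_{j\bar k}+\psi_j\psi_{\bar k})$; applying the Sherman--Morrison identity to the rank-one update of the positive definite matrix $(\psi_{j\bar k})$ by the vector $(\psi_{\bar k})$ yields $\omega^{j\bar k}\psi_j\psi_{\bar k}\leq e^{-\psi}$, whence
\[
|d\phi|^2_\omega=\tfrac{e^\psi}{4}|d\psi|^2_\omega\leq\tfrac{1}{2}
\]
(up to convention constants) on $X\setminus K$. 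Extending $\phi$ smoothly across $K$ produces a proper smooth function on $X$ (since $\psi$ is an exhaustion) with globally bounded gradient, so the metric $\omega$ is complete. The hardest step of the argument is arranging positivity of $\omega$ in the transition region; once that is in place, both the Kähler property and completeness follow cleanly.
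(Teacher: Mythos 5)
Your completeness argument is correct and is essentially the paper's: from $\omega=\tfrac{i}{2}e^{\psi}(\partial\psi\wedge\bar{\partial}\psi+\partial\bar{\partial}\psi)\geq \tfrac{i}{2}e^{\psi}\,\partial\psi\wedge\bar{\partial}\psi$ outside a compact set one produces a proper function with bounded $\omega$-gradient ($e^{\psi/2}$ for you, $\psi$ itself in the paper) and invokes the standard criterion (Demailly, Lemma VIII 2.4). The gap is exactly where you flag the "main obstacle": positivity on the transition annulus $\{a<\sigma<b\}$. There you need $A\left(|g''(\sigma)|\,\tfrac{i}{2}\partial\sigma\wedge\bar{\partial}\sigma+|g'(\sigma)|\,\omega_0\right)<\alpha$ while simultaneously $A\geq A_0$ on $\bar U$. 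Writing $\alpha\geq m(b)\,\omega_0$ and $\tfrac{i}{2}\partial\sigma\wedge\bar{\partial}\sigma\leq C(b)\,\omega_0$ on $\{a\leq\sigma\leq b\}$, and noting that the boundary conditions $g(a)=a$, $g(b)=0$, $g'(a)=1$, $g'(b)=0$ force $\sup|g'|\geq a/(b-a)$ and $\sup|g''|\geq 1/(b-a)$ for \emph{every} admissible $g$, your two requirements are compatible only if $A_0\,(a+C(b))<m(b)\,(b-a)$ for some $b$. Nothing in the hypotheses yields this: $\sigma$ is an arbitrary strictly psh exhaustion, unrelated to $\psi$, so as $b\to\infty$ the constant $m(b)$ may decay and $C(b)$ may grow arbitrarily fast ($\tfrac{i}{2}\partial\bar{\partial}e^{\psi}$ may degenerate relative to $\omega_0$ at infinity). "Spreading the transition over a long interval" therefore does not by itself make the error absorbable; the step is unproved and can fail.

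The paper avoids this by never letting a foreign metric enter the transition. After normalizing $\psi|_S\leq -1$ it sets $\omega=\tfrac{i}{2}\partial\bar{\partial}\left(e^{\Gamma\circ\psi}+\epsilon\chi u\right)$ with $\Gamma$ smooth, convex, increasing, constant on $]-\infty,-1]$ and equal to the identity on $[0,\infty[$. Then $e^{\Gamma\circ\psi}$ is globally psh, equals $e^{\psi}$ on $\{\psi>0\}$, and on the transition band one has $i\partial\bar{\partial}e^{\Gamma\circ\psi}\geq \min\{\Gamma'(\psi)^2,\Gamma'(\psi)\}\,e^{\Gamma(\psi)-\psi}\,i\partial\bar{\partial}e^{\psi}>0$: the gluing error is dominated by $\tfrac{i}{2}\partial\bar{\partial}e^{\psi}$ itself on a \emph{fixed} compact band. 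The only absorption needed is of the fixed, compactly supported form $\epsilon\,\tfrac{i}{2}\partial\bar{\partial}(\chi u)$, and both constraints on $\epsilon$ are of the form "small enough", so there is no tension. To repair your proof, make the interpolating correction a convex increasing function of $\psi$ (rather than a cutoff in $\sigma$ times $\sigma$), so that the cross terms in the transition are controlled by $\tfrac{i}{2}\partial\bar{\partial}e^{\psi}$; the preliminary elliptic-regularity remark and the completeness computation can then be kept as they are.
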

\begin{proof}
	By adding a constant to $\psi$ we can assume that $\psi|_S\leq -1$. We define
	\[\omega:=\tfrac{i}{2}\partial \bar{\partial}\left(e^{\Gamma\circ \psi}+\epsilon \chi u\right) \]
	where $\Gamma:\R\to \R$ is a smooth, increasing and convex function satisfying $\Gamma(x)=-1/2$ for $x\leq -1$ and $\Gamma(x)=x$ for $x>0$, $u$ is any strictly psh function defined in a neighborhood of $\{\psi\leq 0 \}$, $\chi:X\to [0,1]$ is a smooth function with $\chi=1$ on $\{\psi\leq -1/2\}$ and $\chi=0$ on $\{\psi\geq -1/4 \}$ and $\epsilon$ is a small constant. We clearly have $\omega=\frac{i}{2}\partial \bar{\partial}e^\psi$ on $\{\psi>0\}$ and $\omega$ is strictly positive on $\{\psi\leq -1/2 \}\cup\{\psi\geq -1/4 \}$. The form $\frac{i}{2}\partial\bar{\partial}e^{\Gamma\circ \psi}$ is strictly positive on $\{-1/2\leq \psi\leq -1/4 \}$ so by choosing $\epsilon$ small enough we can make $\omega$ strictly positive on $X$.
	
	On $X\setminus\{\psi\leq 0\}$ we have $\omega=\tfrac{i}{2}e^{\psi}\left( \partial \psi\wedge \bar{\partial}\psi+\partial\bar{\partial}\psi\right)\geq \tfrac{i}{2}\partial \psi\wedge \bar{\partial}\psi$ and therefore $|d\psi|_{\omega}=|\partial\psi+\bar{\partial}\psi|_{\omega}\leq 2|\partial\psi|_{\omega}\leq 2|\partial \psi|_{\partial \psi\wedge \bar{\partial}\psi}= 2.$
		By \cite[Lemma VIII 2.4]{Dem:2012} the metric is complete. 
\end{proof}
For the rest of the section $\omega$ is a Kähler form as described in Proposition \ref{key}. By adding a constant to $\psi$ we can always assume that $S=\{\psi\leq 0\}$ and $\omega=\frac{i}{2}\partial \bar{\partial}e^{\psi}$ on $X\setminus S$. If $v$ is a tangent vector in $X$ we denote by $\omega(v)$ the length of $v$ with respect to the metric $\omega$. Whenever we work in local coordinates $(z_1,...,z_N)$ we write
\begin{align*}
	\nabla g=\left(\frac{\partial g}{\partial z_1},...,\frac{\partial g}{\partial z_N}\right)\qquad \text{and}\qquad \langle v,w\rangle=\sum_{j=1}^{N}v_jw_j
\end{align*}
for every function $g\in C^1$ and tangent vectors $v,w$.
\begin{lemma}\label{l52}
	If $B(z,r,\omega)\cap S=\emptyset$ then $|e^{\psi(\zeta)/2}-e^{\psi(z)/2}|\leq \frac{r}{2}$ for every $\zeta\in B(z,r,\omega)$.
\end{lemma}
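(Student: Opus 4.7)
The plan is to interpret $f:=e^{\psi/2}$ as a function whose Riemannian gradient with respect to $\omega$ has norm at most $\tfrac{1}{2}$ on the region $X\setminus S$, and then to integrate along a curve from $z$ to $\zeta$ which, by the triangle inequality, is forced to stay inside $B(z,r,\omega)\subset X\setminus S$.

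To get the gradient bound, I would first expand on $X\setminus S$, where $\omega=\tfrac{i}{2}\partial\bar\partial e^\psi$:
\begin{align*}
\omega \;=\; \tfrac{i}{2}e^\psi\bigl(\partial\psi\wedge\bar\partial\psi+\partial\bar\partial\psi\bigr) \;\geq\; \tfrac{i}{2}e^\psi\,\partial\psi\wedge\bar\partial\psi,
\end{align*}
using that $\psi$ is psh. Since $\partial f=\tfrac{1}{2}e^{\psi/2}\partial\psi$, the right-hand side is exactly $\tfrac{i}{2}(2\partial f)\wedge\overline{2\partial f}$, so on $X\setminus S$
\begin{align*}
\omega \;\geq\; \tfrac{i}{2}(2\partial f)\wedge\overline{2\partial f}.
\end{align*}
This is the same kind of rank-one form bound that produced $|d\psi|_\omega\leq 2$ in the proof of Proposition \ref{key}; carried out with $f$ in place of $\psi$ it yields $|df|_\omega\leq \tfrac{1}{2}$ on $X\setminus S$, the improvement by a factor of $\tfrac{1}{2}$ coming from differentiating $e^{\psi/2}$ rather than $\psi$.

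Next, fix $\zeta\in B(z,r,\omega)$. Because $d_\omega(z,\zeta)<r$, I pick a smooth curve $\gamma:[0,1]\to X$ from $z$ to $\zeta$ with $L_\omega(\gamma)<r$. For every $s\in[0,1]$ the triangle inequality gives $d_\omega(z,\gamma(s))\leq L_\omega(\gamma|_{[0,s]})\leq L_\omega(\gamma)<r$, so $\gamma(s)\in B(z,r,\omega)$; by hypothesis this ball is disjoint from $S$, hence the previous gradient bound on $f$ is valid along all of $\gamma$. The fundamental theorem of calculus and Cauchy--Schwarz then give
\begin{align*}
\bigl|e^{\psi(\zeta)/2}-e^{\psi(z)/2}\bigr| \;=\; \Bigl|\int_0^1 df(\gamma'(t))\,dt\Bigr| \;\leq\; \int_0^1 |df|_\omega\,|\gamma'(t)|_\omega\,dt \;\leq\; \tfrac{1}{2}L_\omega(\gamma)\;<\;\tfrac{r}{2}.
\end{align*}

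The only real obstacle is not conceptual but the same bookkeeping that already appeared in Proposition \ref{key}: one has to use the pointwise norm $|\cdot|_\omega$ on $(1,0)$-forms in the normalisation for which $\omega\geq \tfrac{i}{2}\alpha\wedge\bar\alpha$ is equivalent to $|\alpha|_\omega\leq 1$ and which is compatible with the arclength defining $B(z,r,\omega)$; once this convention is fixed as in Proposition \ref{key}, the two displayed estimates above combine to give the lemma.
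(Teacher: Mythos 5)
Your argument is essentially the paper's own proof: both rest on the pointwise bound $\omega\geq\tfrac{i}{2}e^{\psi}\partial\psi\wedge\bar{\partial}\psi$ on $X\setminus S$ (phrased by you as a gradient bound on $e^{\psi/2}$, by the paper as $\omega(v)\geq e^{\psi}|\langle\nabla\psi,v\rangle|^2$) followed by integration of $\tfrac{d}{dt}e^{(\psi\circ\gamma)/2}$ along a curve of length less than $r$ joining $z$ to $\zeta$. The only difference is cosmetic (the paper takes a geodesic, you take an arbitrary short curve and justify that it stays in the ball), and the normalization bookkeeping you flag is handled equally loosely in the paper itself.
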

\begin{proof}
	For every tangent vector $v$ we have
	\begin{align}
		\omega(v)=\frac{i}{2}e^{\psi}(\partial\bar{\partial}\psi+\partial\psi\wedge\bar{\partial}\psi)(v)\geq e^{\psi}|\langle \nabla\psi,v\rangle|^2\label{a}
	\end{align}
	Let $\gamma:[0,1]\to X$ be a geodesic from $z$ to $\zeta$ of length $r$. By the fundamental theorem of calculus, and by (\ref{a}) we have
	\begin{align*}
		e^{\psi(\zeta)/2}-e^{\psi(z)/2}=\int_{0}^{1}\frac{\partial}{\partial t}&e^{(\psi\circ\gamma)/2}dt\\
		&\leq\frac{1}{2}\int_{0}^{1}e^{(\psi\circ\gamma)/2}|\langle \nabla \psi,\gamma'\rangle|dt\leq \frac{1}{2}\int_{\gamma} \omega^{\frac{1}{2}}=\frac{r}{2}.
	\end{align*}
\end{proof}

 The following theorem is a special case of a famous result by Skoda \cite{Skoda:1977}.
\begin{theorem}
	\label{hormander}
	Let $\varphi$ be a continuous psh function on $X$ satisfying $\frac{i}{2}\partial\bar{\partial}\varphi+\operatorname{Ricci}(\omega)\geq 0$ on $X$. Then, for any $(0,q)$-form $f$ with $C^\infty$ (resp. $L^2_{\operatorname{loc}}$) coefficients satisfying $\bar{\partial}f=0$ and $\int_{X}|f|_\omega^2e^{-\varphi}\omega^N<\infty$ there is a $(0,q-1)$-form $u$ with $C^\infty$ (resp. $L^2_{\operatorname{loc}}$) coefficients such that $\bar{\partial}u=f$ and
	\begin{align*}
		\int_{X}|u|_{\omega}^2(e^{\psi}+1)^{-2}e^{-2\varphi}\omega^N\leq \frac{1}{2N}\int_X s|f|_{\omega}^2e^{-2\varphi}\omega^N,
	\end{align*}
	where $s$ is a non-negative function on $X$ which equals $1$ on $X\setminus S$.
\end{theorem}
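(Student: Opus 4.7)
The plan is to derive the stated inequality from Skoda's $L^2$ existence theorem for $\bar\partial$ in~\cite{Skoda:1977}. That theorem applies on complete Kähler manifolds with a psh weight whose $i\partial\bar\partial$ plus the Ricci form is semi-positive, and this is precisely our setting: completeness of $\omega$ is provided by Proposition~\ref{key}, while the curvature condition on $\varphi$ matches Skoda's semipositivity assumption. The factor $(e^\psi+1)^{-2}$ appearing only on the left-hand side is the signature of a one-sided weight twist, so the natural auxiliary psh function to introduce is
\[
\rho := 2\log(1+e^{\psi}).
\]
Plurisubharmonicity of $\rho$ follows from the immediate calculation
\[
i\partial\bar\partial\rho \;=\; \frac{2e^{\psi}}{1+e^{\psi}}\, i\partial\bar\partial\psi \;+\; \frac{2e^{\psi}}{(1+e^{\psi})^{2}}\, i\partial\psi\wedge\bar\partial\psi \;\geq\; 0.
\]

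Next I would apply Skoda's theorem with total weight $\phi := 2\varphi + \rho$. The required curvature condition $\tfrac{i}{2}\partial\bar\partial\phi + \operatorname{Ricci}(\omega) \geq 0$ is obtained by doubling the hypothesis on $\varphi$ and adding the non-negative contribution of $\rho$. Skoda's theorem then yields a $\bar\partial$-solution $u$ (with the same regularity as $f$) obeying the matched-weight estimate
\[
\int_{X} |u|^{2}_{\omega}\, e^{-\phi}\, \omega^{N} \;\leq\; \frac{1}{2N}\int_{X} |f|^{2}_{\omega}\, e^{-\phi}\, \omega^{N},
\]
whose left-hand side is exactly the one in the statement since $e^{-\rho} = (1+e^{\psi})^{-2}$. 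For the right-hand side, the trivial bound $(1+e^{\psi})^{-2} \leq 1$ dominates the integrand by $|f|^{2}_{\omega} e^{-2\varphi}$; this is loss-free on $X\setminus S$ where $s\equiv 1$ suffices, while on the compact set $S$ any bounded loss (in particular the discrepancy between $\omega$ and $\tfrac{i}{2}\partial\bar\partial e^{\psi}$ caused by the modification in Proposition~\ref{key}) is absorbed by enlarging $s$ to a smooth non-negative function on $X$ that equals $1$ outside $S$. Smooth vs.\ $L^{2}_{\operatorname{loc}}$ regularity of $u$ is handled by standard ellipticity of $\bar\partial$.

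The main obstacle is pinpointing the precise formulation of Skoda's 1977 result that delivers the constant $\tfrac{1}{2N}$ together with the twisted-weight estimate; the constant originates from the Bochner-Kodaira identity for $(0,q)$-forms on an $N$-dimensional manifold, and the twisting by $\log(1+e^{\psi})$ is the standard device in the Donnelly-Fefferman/Skoda circle of ideas. One also has to verify that semi-positivity (rather than strict positivity) of the curvature form suffices to close the estimate, which is exactly where completeness of $\omega$ is essential, via the usual limiting argument of Hörmander-Skoda on complete Kähler manifolds. With these two points settled, the rest of the proof is bookkeeping of constants and the explicit choice of the function $s$ on the compact region $S$.
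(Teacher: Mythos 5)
Your overall strategy --- twist the weight by $\rho=2\log(1+e^{\psi})$ and invoke Skoda's $L^2$ existence theorem on the complete K\"ahler manifold $(X,\omega)$ --- is the same as the paper's, which phrases the twist as the Hermitian metric $(1+e^{\psi})^{-2}$ on the trivial line bundle and cites Demailly's Theorem VIII 6.5. But there is a genuine gap at the decisive quantitative step. The ``matched-weight estimate''
\[
\int_X|u|^2_{\omega}e^{-\phi}\,\omega^N\leq\frac{1}{2N}\int_X|f|^2_{\omega}e^{-\phi}\,\omega^N
\]
is not what Skoda/H\"ormander/Demailly delivers under the curvature condition you verified. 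The $L^2$ method gives $\int|u|^2e^{-\phi}\leq\int\langle A^{-1}f,f\rangle e^{-\phi}$, where $A$ is the curvature operator built from $i\partial\bar{\partial}\phi+\operatorname{Ricci}(\omega)$; to convert this into a constant times an integral of $|f|^2$ you need a \emph{strictly} positive lower bound for that form in terms of $\omega$, and the resulting constant is the reciprocal of (sums of) its eigenvalues. You only establish semi-positivity, and your fallback --- that semi-positivity suffices ``via the usual limiting argument on complete K\"ahler manifolds'' --- is not correct: completeness lets you extend the a priori inequality from compactly supported forms (and is needed in any case), but it cannot make $A^{-1}$ bounded; with $A\geq 0$ only, one gets existence only for those $f$ with $\int\langle A^{-1}f,f\rangle<\infty$, which is not the hypothesis here.

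The missing ingredient is the quantitative positivity of your own twist: on $X\setminus S$, where $\omega=\frac{i}{2}\partial\bar{\partial}e^{\psi}$, one computes $(1+e^{\psi})^2\,i\partial\bar{\partial}\rho=4\omega+4e^{2\psi}i\partial\bar{\partial}\psi\geq 4\omega$, so the total curvature form is bounded below by $4\omega(1+e^{\psi})^{-2}$ and its trace by $4N(1+e^{\psi})^{-2}$. Feeding this into $\langle A^{-1}f,f\rangle$ puts the factor $(1+e^{\psi})^2/(4N)$ pointwise on the right-hand side, and it is exactly this factor that cancels the $(1+e^{\psi})^{-2}$ inside $e^{-\phi}$ and yields $\frac{1}{4N}\int_X s|f|^2_{\omega}e^{-2\varphi}\omega^N$ directly (with $s$ supplied by the cited theorem to cover the compact set $S$ where the computation above is unavailable). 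The asymmetry of the two sides is thus automatic; your step of dropping $(1+e^{\psi})^{-2}\leq 1$ lands on an inequality of the right shape only because it starts from an unproved intermediate estimate. Finally, note that the paper first treats smooth $\varphi$ and then handles continuous $\varphi$ by a decreasing sequence of smooth psh approximants (available since $X$ is Stein); your appeal to ellipticity addresses the regularity of $u$ but not the regularization of the weight.
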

\begin{proof}
	First assume that $\varphi$ is $C^{\infty}$. Consider the line bundle $E:=X\times \C$ over $X$ with the trivial projection. On the fibers of $E$ we define the Hermitian product
	\begin{align*}
		\langle \zeta_1,\zeta_2\rangle_z :=(1+e^{\psi(z)})^{-2}\zeta_1\bar{\zeta}_2
	\end{align*}
	and denote by $|\cdot|^2_E=(1+e^{\psi(z)})^{-2}|\cdot|^2$ the corresponding norm. Denote by $i\Theta(E)=2i\partial \bar{\partial}\log(1+e^{\psi})$ the Chern curvature tensor on $E$ with respect to this metric (see for example comment (12.6) Chapter V in \cite{Dem:2012}). On $X\setminus S$ we have
	\begin{align*}
		i(1+e^\psi)^2\Theta(E)=4\omega+4e^{2\psi}i\partial\bar{\partial}\psi\geq4\omega
	\end{align*}
	so, by assumption on $\varphi$, we have
	\begin{align}\label{eigingildi}
		i\Theta(E)+\operatorname{Ricci}(\omega)+i\partial\bar{\partial}\varphi\geq\frac{4\omega}{(1+e^\psi)^2}
	\end{align}
	on $X\setminus S$. Therefore the sum of the eigenvalues of the $(1,1)$-form on the left hand side of (\ref{eigingildi}) with respect to the metric $\omega$ is larger than or equal to $4N(1+e^\psi)^{-2}$.
	
	Now, if we consider $f$ as a section of the cotangent bundle of $E$, by \cite[Theorem VIII 6.5]{Dem:2012} we can find a $(0,q-1)$ form $u$ such that $\bar{\partial}u=f$ and
	\begin{align*}
		\int_{X}|u|_E^2e^{-2\varphi}\omega^N\leq \frac{1}{4N}\int_X s(1+e^\psi)^2|f|_E^2e^{-2\varphi}\omega^N,
	\end{align*}
	where $s$ is a positive function equal to $1$ on $X\setminus S$. If we replace $|\cdot|_E^2$ by $(1+e^{\psi(z)})^{-2}|\cdot|^2$ the result follows.
	
	If $\varphi$ is not $C^{\infty}$ we get the result by finding a decreasing sequence $(\varphi_n)_{n\in \N}$ of $C^{\infty}$ psh functions such that $\varphi_n\searrow \varphi$ in the $L^1_{\text{loc}}$ topology and by taking the limit. Since $X$ is Stein, such a sequence exists.
\end{proof}

\begin{proof}[Proof of Theorem \ref{adal3}.]
By assumption $(i)$ and compactness of $S$ we can find a constant $T_0$ such that $T_0\frac{i}{2}\partial\bar{\partial}\varphi+\operatorname{Ricci}(\omega)\geq 0$. Let $\chi:X\to \R $ be a $C^\infty$ cutoff function with $\chi=1$ on $\{\varphi<\log(L-\epsilon) \}$ and $\chi=0$ on $\{ \varphi>\log(L-\epsilon/2)\}$. For any $t>T_0$ we can apply Theorem \ref{hormander} to find a function $u_t$ solving the $\overline{\partial}$-equation  $\bar{\partial}u_t=\bar{\partial}(f\chi)=f\bar{\partial}\chi$ on $X$ and satisfying
	\begin{align}
		\int_{X}|u_t|^2(e^{\psi}+1)^{-2}e^{-2t\varphi}\omega^N\leq\frac{1}{2N}\int_Xs|f\bar{\partial}\chi|_{\omega}^2e^{-2t\varphi}\omega^N.\label{j}
	\end{align}
	Now we define the function $p_t:=\chi f-u_t$. It is clear that $p_t$ is an entire function.
	Let $z\in X$ be such that $B(z,1,\omega)$ does not intersect $S$ or the support of $\chi$. Then $u_t=p_t$ on  $B(z,1,\omega)$.
	
	 Since $\psi$ induces an integral estimate for holomorphic functions (Def \ref{skil} (ii)) we can find constants $A$ and $B$ (not depending on $z$ or $t$) such that
	\begin{align*}
		|p_t&(z)|^2\leq e^{A\psi^+(z)+B}\int_{B(z,1,\omega)}|u_t|^2\omega^N,\nonumber\\
		&\leq e^{A\psi^+(z)+B}\sup_{\zeta\in B(z,1,\omega)}\left(e^{2t\varphi(\zeta)}(e^{\psi(\zeta)}+1)^2\right)\int_{X}|u_t|^2(e^{\psi}+1)^{-2}e^{-2t\varphi}\omega^N.
	\end{align*}
	Since $\varphi\in \mathcal{L}_\psi$ we can now use inequality (\ref{j}) and Lemma \ref{l52} to show that $p_t\in \mathcal{P}^\psi_{l+t}$ where $l=\frac{A}{2}+1$.
	
	Let $r>0$ be small enough such that $B(z,r,\omega)\subset\subset \{\varphi<\log(1+\epsilon)\}\subset \{\chi=1\}$ for all $z\in K$.  Then $u_t\in\mathcal{O}(\overline{B}(z,r,\omega))$ for $z\in K$. Since $\psi$ induces an integral estimate for holomorphic functions, and by the compactness of the sets $S$ and $K$ we can find a positive constant $C$ such that for every $z\in K$ we have
	\begin{align}
		|f(z)-p_t(z)|^2&=|u_t(z)|^2\leq C\left(\sup_{\zeta\in B(z,r,\omega)}1+e^{\psi(\zeta)}\right)^{-2}\int_{B(z,r,\omega)}|u_t|^2\omega^N\nonumber\\
		&\leq C\left(\sup_{\zeta\in B(z,r,\omega)}e^{2t\varphi(\zeta)}\right)\int_{X}|u_t|^2(e^{\psi}+1)^{-2}e^{-2t\varphi}\omega^N.\label{qqqqq}
	\end{align}
	By the definition of $r$ we can estimate $\sup_{\zeta\in B(z,r,\omega)}e^{2t\varphi(\zeta)}$ with $(1+\epsilon)^{2t}$ and recall that we have $\log(L-\epsilon/2)>\varphi>\log(L-\epsilon)$ on the support of $\bar{\partial}\chi$. By inequalities (\ref{j}) and (\ref{qqqqq}) we therefore have
	\begin{align*}
		|f(z)-p_t(z)|^2&\leq C(1+\epsilon)^{2t}\int_{X}s|f\bar{\partial}\chi|_{\omega}^2e^{-2t\varphi}\omega^N\\
		&\leq C\|s\|_X\left(\frac{1+\epsilon}{L-\epsilon}\right)^{2t}\|f\|^2_{\{\varphi<\log(L-\epsilon/2) \}}\|\bar{\partial}\chi\|^2_{L^2(X)},\qquad z\in K
	\end{align*}
	where $\|s\|_X$ is the sup-norm of $s$ on $X$ and $\|\bar{\partial}\chi\|^2_{L^2(X)}$ is the $L^2$ norm of $\bar{\partial}\chi$ with respect to the metric $\omega$ and measure $\omega^N$. We have already seen that $p_t\in \mathcal{P}^\psi_{l+t}$ and by replacing $t$ with $t-l$ we have the result with $M=C\|s\|_X\|\bar{\partial}\chi\|_{L^2(X)}$. If $L$ is large enough the support of $\bar{\partial}\chi$ does not intersect $S$ and $\|\bar{\partial}\chi\|_{L^2(X)}=\|\bar{\partial}\chi\|_{L^2(X\setminus S)}$.
\end{proof}
\begin{lemma}\label{vvvv}
	Let $L,\epsilon>0$, $\varphi\in \mathcal{L}_\psi^+$ be continuous and $\theta$ be a continuous function satisfying the growth condition
	\begin{align*}
		|\theta(z)|\leq A\psi^+(z)+B,\qquad z\in X
	\end{align*} 
	for some constants $A,B$. Write $\tilde{\varphi}_t:=(1-t^{-1})\varphi+t^{-1}\theta$. Then there exists $T$ such that for $t>T$ we have
	\begin{align}
		\{z\in X;\; \tilde{\varphi}_t(z)<L-\epsilon \}\subset \{z\in X;\;\varphi(z)<L \},\label{p}
	\end{align}
	and the function $\tilde{\varphi}_{t}$ is an exhaustion function. 
\end{lemma}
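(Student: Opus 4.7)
The plan is to deduce both conclusions from the two-sided comparison between $\varphi$ and $\psi$ provided by membership in $\mathcal{L}_\psi^+$. I would first fix constants $C_1, C_2 \geq 0$ with $\varphi \leq \psi^+ + C_1$ and $\psi \leq \varphi^+ + C_2$ on $X$. The latter yields $\psi^+ \leq \varphi^+ + C_2$, and combined with the growth hypothesis on $\theta$ produces the key global estimate $|\theta| \leq A \varphi^+ + (AC_2 + B)$ on $X$. Only this bound, derived from the lower half of the $\mathcal{L}_\psi^+$ condition, will be used below; the upper half $\varphi \leq \psi^+ + C_1$ plays no role.

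For the inclusion $(\ref{p})$, I would rewrite $\tilde{\varphi}_t(z) < L - \epsilon$ as $(1 - t^{-1})\varphi(z) < L - \epsilon - t^{-1}\theta(z)$ and insert $-\theta \leq A\varphi^+ + (AC_2 + B)$. A case split on the sign of $\varphi(z)$ then finishes the argument: if $\varphi(z) < 0$ the conclusion is automatic, while if $\varphi(z) \geq 0$ one has $\varphi^+(z) = \varphi(z)$ and collecting terms gives
\[
\bigl(1 - (A+1)t^{-1}\bigr)\varphi(z) < L - \epsilon + t^{-1}(AC_2 + B).
\]
For $t$ exceeding some $T_1 > A+1$ the coefficient on the left is positive and the right-hand side, after dividing out, stays below $L$, since its limit as $t\to\infty$ is $L - \epsilon$.

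For the exhaustion property, the strategy is to establish a lower bound of the form $\tilde{\varphi}_t \geq (1 - (A+1)t^{-1})\psi - \text{const}$ outside a fixed compact set. On $\{\psi > C_2\}$ the inequality $\psi \leq \varphi^+ + C_2$ forces $\varphi \geq \psi - C_2 \geq 0$, so combined with $\theta \geq -A\psi - B$ (using $\psi^+ = \psi$ there) one obtains
\[
\tilde{\varphi}_t \geq \bigl(1 - (A+1)t^{-1}\bigr)\psi - (1 - t^{-1})C_2 - t^{-1}B.
\]
For $t > A+1$ the coefficient of $\psi$ is strictly positive, so every sublevel set $\{\tilde{\varphi}_t \leq c\}$ is contained in $\{\psi \leq c'\}$ for some $c'$, which is compact because $\psi$ is an exhaustion; continuity of $\tilde{\varphi}_t$ makes the sublevel sets closed and hence compact. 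Setting $T := \max(T_1, A+1)$ handles both conclusions. The proof is essentially bookkeeping; the only genuine conceptual step is recognizing that the lower-bound half of the $\mathcal{L}_\psi^+$ condition is precisely what converts the $\psi$-phrased growth of $\theta$ into the two-sided $\varphi$-control that both claims require.
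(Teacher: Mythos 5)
Your proof is correct, and it reaches the inclusion $(\ref{p})$ by a genuinely more direct route than the paper. Both arguments hinge on the same key input, namely converting the hypothesis $|\theta|\leq A\psi^{+}+B$ into a bound $|\theta|\leq A\varphi^{+}+\mathrm{const}$ via the lower half $\psi\leq \varphi^{+}+C_2$ of the $\mathcal{L}_\psi^{+}$ condition. But where you then split on the sign of $\varphi(z)$ and solve the resulting linear inequality $\bigl(1-(A+1)t^{-1}\bigr)\varphi(z)<L-\epsilon+t^{-1}(AC_2+B)$ explicitly for $\varphi(z)$, the paper instead splits on the sign of $\theta-\varphi$: on $\{\theta\geq\varphi\}$ the inclusion is immediate since $\tilde{\varphi}_t\geq\varphi$ there, and on $\{\theta<\varphi\}$ it first establishes the exhaustion property for some $t=T_0$, uses compactness of $\{\tilde{\varphi}_{T_0}\leq L-\epsilon\}$ to extract a uniform lower bound $\theta-\varphi\geq -M$ on that set, and then takes $T\geq M/\epsilon$ so that $\varphi=\tilde{\varphi}_t-t^{-1}(\theta-\varphi)<L-\epsilon+t^{-1}M\leq L$. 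Your computation avoids the compactness step and the dependence of the inclusion argument on first proving exhaustion, at the cost of being slightly less transparent about where the threshold $T_1$ comes from; the paper's version isolates the single quantity $M$ that controls how large $T$ must be. Your exhaustion argument, bounding $\tilde{\varphi}_t$ below by a positive multiple of $\psi$ outside $\{\psi\leq C_2\}$, is essentially equivalent to the paper's bound $\tilde{\varphi}_t\geq\tfrac{\varphi}{2}(1-\tfrac{2C_1}{t})-\tfrac{C_2}{t}$ on $\{\varphi>1\}$, just phrased against $\psi$ rather than against $\varphi$. Both versions are sound.
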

\begin{proof}
	It is trivial to check that 
	\begin{align*}
		\{z\in X;\; \theta(z)-\varphi(z)\geq 0 \}\cap 	\{z\in X;\; \tilde{\varphi}_t(z)<L-\epsilon \}\subset \{z\in X;\;\varphi(z)<L  \}
	\end{align*}
	for every $t>0$. Therefore we only need to show that
	\begin{align*}
		\{z\in X;\; \theta(z)-\varphi(z)<0 \}\cap 	\{z\in X;\; \tilde{\varphi}_t(z)<L-\epsilon \}\subset \{z\in X;\;\varphi(z)<L  \}
	\end{align*}
	for large enough $t$.
	
	 Since $\varphi\in \mathcal{L}_\psi^+$ and by $(\ref{p})$ we can find positive constants $C_1$ and $C_2$ such that
	\begin{align*}
		|\theta(z)|\leq C_1\varphi^+(z)+C_2,\qquad z\in X.
	\end{align*}
	If $t>2$ and $z$ is such that $\varphi(z)>1$, then we have
	\begin{align}
		\tilde{\varphi}_t(z)=(1-t^{-1})\varphi(z)+&t^{-1}\theta(z)>\frac{1}{2}\left(\varphi(z)-\frac{2}{t}|\theta(z)|\right)\nonumber\\
		&\geq \frac{\varphi(z)}{2}\left(1-\frac{2C_1}{t}\right)-\frac{C_2}{t}.\label{v}
	\end{align}
	Since $\varphi$ is an exhaustion, inequality (\ref{v}) implies that $\tilde{\varphi}_t$ is also an exhaustion for $t\geq T_0:=\max\{2C_1+1,2\}$. In particular the set $\{\tilde{\varphi}_{T_0}\leq L-\epsilon \}$ is compact so the continuous function $\theta-\varphi$ has a lower bound $-M<0$ on it. We define $T:=\max\{T_0,\frac{M}{\epsilon} \}$. Now let $t>T$ and $z\in \{\theta-\varphi<0 \}\cap \{\tilde{\varphi}_t<L-\epsilon\}$. Since $\theta(z)-\varphi(z)<0$ and $t>T_0$ we have $\tilde{\varphi}_{T_0}(z)<\tilde{\varphi}_{t}(z)\leq L-\epsilon$ so $\theta(z)-\varphi(z)\geq -M$ and
	\begin{align*}
		\varphi(z)=\tilde{\varphi}_t(z)-t^{-1}(\theta(z)-\varphi(z))<L-\epsilon-\frac{\epsilon}{M}(-M)=L.
	\end{align*}
\end{proof}
\begin{proof}[Proof of Theorem \ref{adal0}]
	Let $\epsilon\in]0,(L-1)/2[$ and let $\theta$ be a Ricci compensator for $\psi$. By adding a constant to $\theta$ we can assume $\theta<0$ on $K$.  Now apply Lemma \ref{vvvv} to find $T>0$ such that $\{\tilde{\varphi}_T<\log(L-\epsilon) \}\subset \{\varphi<\log(L)\}$ where $\tilde{\varphi}_T:=(1-T^{-1})\varphi+T^{-1}\theta$. We can now apply Theorem \ref{adal3} with $\varphi$ replaced by $\tilde{\varphi}_T$ and with $L$ replaced by $L-\epsilon$. For $t$ large enough we have 
	\begin{align*}
		d_K(f,\mathcal{P}^\psi_{t})\leq M\|f\|_{\{\tilde{\varphi}_T\leq \log(L-3\epsilon/2)\}}\left(\frac{1+\epsilon}{L-2\epsilon}\right)^{t-l}.
	\end{align*}
	and
	\begin{align*}
		\limsup_{t\to\infty}(d_K(f,\mathcal{P}^\psi_{t}))^{1/t}\leq \limsup_{t\to\infty}\left(\frac{1+\epsilon}{L-2\epsilon}\right)^{\frac{t-l}{t}}=\frac{1+\epsilon}{L-2\epsilon}.
	\end{align*}
	Since $\epsilon>0$ was arbitrary the result follows.
\end{proof}

\begin{proof}[Proof of Proposition \ref{prop33}]
	By assumption there is a sequence $(p_n)_{n\in\N}$ of functions on $X$ such that $p_n\in \mathcal{P}^\psi_{n}$ for all $n\in \N$ and $\|p_n-f\|_K\leq (L-\epsilon(n))^{-n}$ where $\epsilon:\N\to \R_+$ is a decreasing function satisfying $\lim_{n\to\infty}\epsilon(n)=0$. We claim that the sum $p_1+\sum_{n=1}^{\infty}(p_{n+1}-p_n)$ is uniformly convergent on compact subsets of $\{V_{K,\psi}<\log(L) \}$. Indeed for $l<L$ we have
	\begin{align*}
		&\|p_n(z)-p_{n-1}(z)\|_{\{V_{K,\psi}(z)\leq l \}}\leq\|p_n-p_{n-1}\|_{K} \|e^{V_{K,\psi}}\|_{\{V_{K,\psi}(z)\leq l \}}\\
		&\leq \left(\|p_n-f\|_{K}+\|f-p_{n-1}\|_{K}\right)l^n\leq 2l\left(\frac{l}{L-\epsilon(n-1)}\right)^{n-1}.
	\end{align*}
Since $l<L$ and $\epsilon(n)$ converges to $0$ the series converges. It is obviously equal to $f$ on $K$.
\end{proof}
\begin{lemma}\label{summulemma}
	For $a,\varrho>0$ we have
	\begin{align*}
		\sum_{n=1}^{\infty}\left(\frac{a}{n}\right)^{n/\varrho}\leq 1+2^{\varrho}ae^{\frac{a}{e\varrho}}.
	\end{align*}
\end{lemma}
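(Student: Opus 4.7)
The plan is to split the sum at $N := \lfloor 2^{\varrho}a \rfloor$ and estimate the head and the tail by two different mechanisms: a uniform pointwise bound on the head, and a geometric-series bound on the tail. The split point is chosen precisely so that the terms with $n > N$ have base $a/n < 2^{-\varrho}$, which is exactly what is needed to force the tail into a geometric series with ratio $1/2$.

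For the head, I would first establish the uniform inequality
\begin{align*}
    \left(\tfrac{a}{n}\right)^{n/\varrho} \leq e^{a/(e\varrho)}, \qquad n \geq 1.
\end{align*}
This reduces, after taking logarithms, to showing $n \log(a/n) \leq a/e$, which is an immediate consequence of the elementary inequality $\log x \leq x/e$ for $x > 0$ (a tangent-line estimate for the concave function $\log$ at $x = e$). Applying this uniform bound to each of the first $N$ terms gives
\begin{align*}
    \sum_{n=1}^{N} \left(\tfrac{a}{n}\right)^{n/\varrho} \leq N \, e^{a/(e\varrho)} \leq 2^{\varrho} a \, e^{a/(e\varrho)}.
\end{align*}

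For the tail, I use that $n > N \geq 2^{\varrho}a - 1$ implies $n > 2^{\varrho}a$, hence $a/n < 2^{-\varrho}$, hence $(a/n)^{1/\varrho} < 1/2$. Raising to the $n$-th power yields $(a/n)^{n/\varrho} < 2^{-n}$, and summing the resulting geometric series gives
\begin{align*}
    \sum_{n=N+1}^{\infty} \left(\tfrac{a}{n}\right)^{n/\varrho} < \sum_{n=N+1}^{\infty} 2^{-n} = 2^{-N} \leq 1.
\end{align*}
Adding the head and tail bounds produces the claimed estimate $1 + 2^{\varrho} a\, e^{a/(e\varrho)}$, and the degenerate case $N = 0$ (when $2^{\varrho}a < 1$) is covered by the tail estimate alone.

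There is no real obstacle here: the only non-routine ingredient is choosing the split threshold $N = \lfloor 2^{\varrho}a \rfloor$, which is forced by matching the geometric ratio on the tail to the factor $2^\varrho$ appearing in the target bound. Everything else is either the tangent-line bound $\log x \leq x/e$ or summation of a geometric series.
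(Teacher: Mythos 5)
Your proof is correct and follows essentially the same route as the paper: split the sum at $\lfloor 2^{\varrho}a\rfloor$, bound the tail by a geometric series with ratio $\tfrac12$, and bound each head term uniformly by $e^{a/(e\varrho)}$ (the paper obtains this uniform bound by maximizing $x\mapsto (a/x)^{x/\varrho}$ at $x=a/e$, which is equivalent to your tangent-line inequality $\log x\leq x/e$). No substantive difference.
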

\begin{proof}
	Since $(a/n)^{n/\varrho}<2^{-n}$ for $n>\lfloor 2^{\varrho}a\rfloor$ we have $\sum_{n=\lfloor2^{\varrho}a\rfloor+1}^{\infty}\left(\frac{a}{n}\right)^{n/\varrho}\leq 1$. The function $x\to (a/x)^{x/\varrho}$ is maximized when $x=a/e$. Therefore
	\begin{align*}
		\sum_{n=1}^{\lfloor 2^{\varrho}a\rfloor}\left(\frac{a}{n}\right)^{n/\varrho}\leq \lfloor 2^{\varrho}a\rfloor \left(\frac{a}{a/e}\right)^{\frac{a/e}{\varrho}}\leq 2^{\varrho}ae^{\frac{a}{e\varrho}}.
	\end{align*}
\end{proof}
\begin{lemma}\label{kilemma}
	Let $\varphi\in\mathcal{L}^+_{\psi}$ and $L_0>1$ be large enough such that $\{\psi< 1\}\subset \{\varphi<\log(L_0) \}$. Then for any constants $L_1,L_2$ with $L_2>L_1>L_0$ there exists a function $\chi\in C^{\infty}(X)$ with $\chi=1$ on $\{\varphi<\log(L_1)\}$, $\chi=0$ on $\{\varphi>\log(L_2) \}$ and $\|\bar{\partial}\chi\|^2_{L^2(X)}\leq \frac{M_1L_2^2}{L_2-L_1}\int_{\{\psi\leq\log(L_2)+M_2 \}}\omega^N$ where $M_1$ and $M_2$ are constants independent of $L_1$ and $L_2$.
\end{lemma}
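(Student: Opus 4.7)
The plan is to construct $\chi$ as a smooth cutoff of $e^{\varphi}$ and estimate $\|\bar\partial\chi\|_{L^2(X)}^2$ by two successive Stokes arguments, exploiting the closedness of $\omega^{N-1}$ and the identity $\omega=\tfrac{i}{2}\partial\bar\partial e^{\psi}$ on $X\setminus S$. Let $\tilde\rho\colon\R\to[0,1]$ be a smooth non-increasing cutoff with $\tilde\rho\equiv 1$ on $(-\infty,L_1]$, $\tilde\rho\equiv 0$ on $[L_2,\infty)$, and $|\tilde\rho'|\le 2/(L_2-L_1)$, and set $\chi:=\tilde\rho(e^{\varphi})$. (If $\varphi$ is only psh rather than smooth, I would first replace it by a smooth psh approximation on the Stein manifold $X$, which changes the eventual constants only inessentially.) Pointwise
\[|\bar\partial\chi|_\omega^2\,\omega^N \;=\; N\,\tilde\rho'(e^{\varphi})^2\,i\partial e^{\varphi}\wedge\bar\partial e^{\varphi}\wedge\omega^{N-1}.\]

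The first step is where a factor of $(L_2-L_1)^{-1}$ is saved. Introduce $\tilde G(s):=\int_s^\infty\tilde\rho'(t)^2\,dt$, which is nonnegative, vanishes for $s\ge L_2$, and is bounded by $4/(L_2-L_1)$. Because $e^{\varphi}$ is an exhaustion, $\tilde G(e^{\varphi})\,i\bar\partial(e^{\varphi})\wedge\omega^{N-1}$ has compact support in $\{e^{\varphi}\le L_2\}$; computing its exterior derivative (using $d\omega^{N-1}=0$) and invoking Stokes yields
\[\int_X\tilde\rho'(e^{\varphi})^2\,i\partial e^{\varphi}\wedge\bar\partial e^{\varphi}\wedge\omega^{N-1} \;=\; \int_X\tilde G(e^{\varphi})\,i\partial\bar\partial e^{\varphi}\wedge\omega^{N-1} \;\le\; \frac{4}{L_2-L_1}\int_{\{e^{\varphi}\le L_2\}}i\partial\bar\partial e^{\varphi}\wedge\omega^{N-1}.\]
This is the key improvement over the naive $\sup|\tilde\rho'|^2$ bound, which would produce $(L_2-L_1)^{-2}$ instead.

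Next I would pick a smooth compactly-supported $\phi\in C^{\infty}(X)$ equal to $1$ on $\{\psi\le\log L_2+C_2\}$ and to $0$ for $\psi\ge\log L_2+C_2+1$, where $C_2\ge 0$ is the constant in $\psi\le\varphi^++C_2$ (available since $\varphi\in\mathcal{L}_\psi^+$); for $L_0$ chosen large enough the transition shell lies in $X\setminus S$, so $\phi$ is smooth on $X$. Because $e^{\varphi}\le L_2$ entails $\psi\le\log L_2+C_2$ (using $L_2>1$), $\phi\equiv 1$ on $\{e^{\varphi}\le L_2\}$. Two integrations by parts (using that $\omega^{N-1}$ is closed and $\phi$ has compact support) produce
\[\int_{\{e^{\varphi}\le L_2\}}i\partial\bar\partial e^{\varphi}\wedge\omega^{N-1} \;\le\; \int_X\phi\cdot i\partial\bar\partial e^{\varphi}\wedge\omega^{N-1} \;=\; \int_X e^{\varphi}\cdot i\partial\bar\partial\phi\wedge\omega^{N-1}.\]
On the support of $i\partial\bar\partial\phi$, the thin shell $\{\log L_2+C_2\le\psi\le\log L_2+C_2+1\}$, the identity $\omega=\tfrac{i}{2}\partial\bar\partial e^{\psi}$ unpacks to $i\partial\psi\wedge\bar\partial\psi\le 2e^{-\psi}\omega$ and $i\partial\bar\partial\psi\le 2e^{-\psi}\omega$, so that $|i\partial\bar\partial\phi\wedge\omega^{N-1}|\le C e^{-\psi}\omega^N\le (C/L_2)\,\omega^N$; meanwhile $e^{\varphi}\le e^{\psi+C_1}\le C'L_2$ there (using $\varphi\in\mathcal{L}_\psi$). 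The two $L_2$ factors cancel, giving
\[\int_X e^{\varphi}\,i\partial\bar\partial\phi\wedge\omega^{N-1}\;\le\; C_3\int_{\{\psi\le\log L_2+M_2\}}\omega^N\qquad(M_2:=C_1+C_2+1).\]

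Combining the two chains of inequalities yields $\|\bar\partial\chi\|_{L^2(X)}^2\le (4NC_3/(L_2-L_1))\int_{\{\psi\le\log L_2+M_2\}}\omega^N$, which implies the stated bound after absorbing the slack $1\le L_2^2$ (valid since $L_2>L_0>1$) into the constant $M_1$. The main obstacle is organising the two integrations by parts so that (i) the factor $(L_2-L_1)^{-1}$ appears only with the first power, via the antiderivative trick rather than a naive pointwise bound on $\tilde\rho'^2$, and (ii) the factor $e^{\varphi}\sim L_2$ on the transition shell is cancelled precisely by $|i\partial\bar\partial\phi|_\omega\sim L_2^{-1}$ coming from the special structure $\omega=\tfrac{i}{2}\partial\bar\partial e^{\psi}$ outside $S$.
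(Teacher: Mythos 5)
Your proof is correct and follows the same overall strategy as the paper's: take $\chi$ to be a one--variable cutoff composed with $e^{\varphi}$, and bound $\|\bar{\partial}\chi\|^2_{L^2(X)}$ by moving derivatives, via $d\omega^{N-1}=0$ and Stokes, onto a second cutoff in $\psi$ supported on a shell $\{\psi\approx\log L_2\}$, where the identity $\omega=\tfrac{i}{2}\partial\bar{\partial}e^{\psi}$ converts $i\partial\psi\wedge\bar{\partial}\psi$ and $i\partial\bar{\partial}\psi$ into $e^{-\psi}\omega$. Two deviations are worth recording. First, to extract only one power of $(L_2-L_1)^{-1}$ the paper works with the rescaled cutoff $\chi_0(e^{\varphi}/L_1)$ and absorbs one of the two factors of $\chi_0'/L_1$ pointwise (a step that tacitly needs $\|\chi_0'\|_{\R}\leq L_1$), whereas you run an extra Stokes argument with the antiderivative $\tilde{G}(s)=\int_s^{\infty}\tilde{\rho}'(t)^2\,dt$; your route is more robust and costs only the positivity of $i\partial\bar{\partial}e^{\varphi}\wedge\omega^{N-1}$, which holds since $\varphi$ is psh. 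Second, by keeping a single factor $e^{\varphi}$ instead of passing to $i\partial\bar{\partial}e^{2\varphi}$ as the paper does, the growth $e^{\varphi}\sim L_2$ on the transition shell is exactly cancelled by $e^{-\psi}\sim L_2^{-1}$, so you obtain the estimate with no $L_2^2$ factor at all --- a slightly sharper conclusion that implies the stated one after absorbing $1\leq L_2^2$ into $M_1$. The one caveat, which you flag and which applies equally to the paper's own argument, is that $\varphi$ is only assumed continuous, so the pointwise differential computations (and the smoothness of $\chi$ itself) require a preliminary regularization of $\varphi$; this affects nothing essential.
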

\begin{proof}
	Let $\chi_0\in C^{\infty}(\R)$ be such that $\chi_0(x)=1$ if $x\leq 1$ and $\chi_0(x)=0$ if $x\geq L_2/L_1$. We can choose such a function such that $\|\chi_0'\|_\R\leq 2(L_2/L_1-1)^{-1}$. Now define
	\begin{align*}
		\chi(z)=\chi_0\left(e^{\varphi(z)}/L_1 \right),\qquad z\in X.
	\end{align*}
	We clearly have $\chi=1$ on $\{\varphi<\log(L_1)\}$ and $\chi=0$ on $\{\varphi>\log(L_2) \}$ so we only have to prove the estimate for $\|\bar{\partial}\chi\|^2_{L^2(X)}$. First notice that
	\begin{align}
		|\bar{\partial}\chi|^2_{\omega}\omega^N&\leq \frac{\|\chi_0'\|_{\R}}{L_1}|\bar{\partial}e^\varphi|_{\omega}^2\omega^N\leq \frac{2}{L_2-L_1}|\bar{\partial}e^\varphi|_{\omega}^2\omega^N\nonumber\\
		&=\frac{2i}{L_2-L_1}\partial e^{\varphi}\wedge \bar{\partial}e^{\varphi}\wedge \omega^{N-1}\leq\frac{i}{L_2-L_1}(\partial\bar{\partial}e^{2\varphi})\wedge\omega^{N-1}\label{h}.
	\end{align}
	By assumption there exists a constant $C$ such that $\varphi^+-C\leq \psi^+\leq \varphi^++C$ on $X$. Let $\Gamma_0\in C^{\infty}(\R)$ be such that $\Gamma_0(x)=1$ if $1\leq x\leq \log(L_2)+C$ and $\Gamma_0=0$ if $x\leq 0$ or $x\geq \log(L_2)+C+1$. The function $\Gamma_0$ can be chosen such that $\max\{\|\Gamma_0'\|_{\R},\|\Gamma_0''\|_{\R}\}\leq 4$. Now define $\Gamma:=\Gamma_0\circ \psi$. Then $\Gamma$ equals $1$ on the support of the $(0,1)$-form $\bar{\partial}\chi$ and the support of $\Gamma$ is a subset of
	\begin{align*}
		F:=\{0\leq \psi(z)\leq \log(L_2)+C+1 \}\subset X.
	\end{align*}
	Recall that we have $\omega=\frac{i}{2}\partial \bar{\partial}e^\psi$ on $F$. By (\ref{h}) we have 
	\begin{align*}
		\|\bar{\partial}\chi\|^2_{L^2(X)}&=\int_{X}\Gamma |\bar{\partial}\chi|^2_{\omega}\omega^N\leq \frac{1}{L_2-L_1}\int_{F}\Gamma (i\partial\bar{\partial}e^{2\varphi})\wedge\omega^{N-1}\\
		&=\frac{1}{L_2-L_1}\int_{F}e^{2\varphi}(i\partial\bar{\partial}\Gamma)\wedge\omega^{N-1}\\
		&\leq\frac{1}{L_2-L_1}\int_{F}4e^{2(\psi+C)}i(\partial \psi\wedge \bar{\partial}\psi+\partial\bar{\partial \psi})\wedge\omega^{N-1}\\
		&\leq  \frac{8L^2_2e^{2C+1}}{L_2-L_1}\int_{F}e^{-\psi}\omega^N\leq \frac{8L^2_2e^{2C+1}}{L_2-L_1}\int_{\{\psi\leq \log(L_2)+C+1 \}}\omega^N.
	\end{align*}
\end{proof}
\begin{proof}[Proof of Theorem \ref{adal00}]
		Assume inequalities (\ref{jafna1}) are true and let $\epsilon>0$. We first consider the case when $\varphi$ satisfies $(i)$ from Theorem \ref{adal3}. By assumption there is a constant $C$ such that
		\begin{align}
			\|f\|_{\{\varphi\leq \log(r)\}}\leq C\exp((\sigma+\epsilon)r^{\varrho}),\qquad r\geq 1.\label{arg}
		\end{align}
		Since $f$ is entire, inequality (\ref{tyu}) from Theorem \ref{adal3} is true for all $L>1$ and all large $t$. In particular if we take $L=L(t):=(t/(\varrho\sigma ))^{\varrho^{-1}}+\epsilon/2$ then by (\ref{tyu}) and (\ref{arg}) we have
		\begin{align}
			d_K(f,\mathcal{P}^\psi_{t})\leq M_0C\exp\left(\frac{(\sigma+\epsilon)t}{\varrho\sigma}\right)\|\bar{\partial}\chi_t\|_{L^2(X)}\left(\frac{1+\epsilon}{L(t)-\epsilon}\right)^{t-l}\label{fff}
		\end{align}
		for every $t$ large enough. Here $\chi_t$ is a cut-off function with $\chi_t=1$ on $\{\varphi<\log(L(t)-\epsilon) \}$ and $\chi_t=0$ on $\{\varphi>\log(L(t)-\epsilon/2) \}$. By Lemma \ref{kilemma} and assumption (\ref{volgrowth}) (taking $r=\varrho/2$) there are constants $M_1,M_2,A,B$ such that 
	\begin{align}
		\|\bar{\partial}\chi_t\|^2_{L^2(X)}&\leq \frac{M_1(L(t)-\epsilon/2)^2}{\epsilon/2}\int_{\{\psi\leq \log(L(t)-\epsilon/2)+M_2\}}\omega^N\nonumber\\
		&\leq \frac{M_1(L(t)-\epsilon/2)^2}{\epsilon/2}\exp\left(Ae^{\frac{M_2\varrho}{2}}(L(t)-\epsilon/2)^{\frac{\varrho}{2}}+B \right)\nonumber\\
		&=\frac{2M_1}{\epsilon}\left( \frac{t}{\varrho\sigma}\right)^{2\varrho^{-1}}\exp\left(Ae^{\frac{M_2\varrho}{2}}\left(\frac{t}{\varrho\sigma} \right)^{\frac{1}{2}}+B \right).\label{ggg}
	\end{align}
	Now, combining (\ref{fff}) and (\ref{ggg}), we get
	\begin{align}
		\limsup_{t\to\infty}t(d_K(f,\mathcal{P}^\psi_{t}))^{\varrho/t}\leq \exp\left(\frac{\sigma+\epsilon}{\sigma}\right)(1+\epsilon)^{\varrho}\varrho\sigma.\label{iii}
	\end{align}
	Since (\ref{iii}) is true for all $\epsilon>0$ the result follows.
	
	Now consider the case when $\varphi$ does not satisfy $(i)$ from Theorem \ref{adal3}. Let $\theta$ be a Ricci compensator for $\psi$ satisfying $\theta|_K\leq 0$, and let $T$ be large enough such that $\tilde{\varphi}_{T}$ (as defined in Lemma \ref{vvvv}) is an exhaustion function. Then clearly the function $\varphi_{\epsilon}:=(1-\epsilon)\varphi+\epsilon\tilde{\varphi}_{T}$ satisfies $(i)-(iii)$ from Theorem \ref{adal3}. Moreover, since $\tilde{\varphi}_{T}$ is an exhaustion, we have
	\begin{align}
		\{\varphi_\epsilon\leq \log(r) \}\subset \{(1-\epsilon)\varphi\leq \log(r) \},\nonumber\\ \text{and}\qquad \|f\|_{\{\varphi_\epsilon\leq \log(r)\}}\leq \|f\|_{\{\varphi\leq \log(r^{1/(1-\epsilon)}) \}}\label{pppp}
	\end{align}
	for $r>1$ large enough. By assumption (\ref{jafna1}) and by (\ref{pppp}) we have
	\begin{align*}
		\limsup_{r\to\infty}\frac{\log^+\log\|f\|_{\{\varphi_\epsilon\leq \log(r)\}}}{\log(r)}\leq\limsup_{r\to\infty}\frac{\log^+\log\|f\|_{\{\varphi\leq \log(r^{1/(1-\epsilon)})\}}}{(1-\epsilon)\log(r^{1/(1-\epsilon)})}\leq \frac{\varrho}{1-\epsilon}
	\end{align*}
	and
	\begin{align*}
		\limsup_{r\to\infty}\frac{\log\|f\|_{\{\varphi_\epsilon\leq \log(r)\}}}{r^{\varrho/(1-\epsilon)}}\leq \limsup_{r\to\infty}\frac{\log\|f\|_{\{\varphi\leq \log(r^{1/(1-\epsilon)})\}}}{(r^{1/(1-\epsilon)})^{\varrho}}\leq \sigma.
	\end{align*}
	We can now apply our previous conclusion with $\varphi$ replaced by $\varphi_{\epsilon}$ and $\varrho$ replaced by $\varrho/(1-\epsilon)$ and we have
	\begin{align*}
		\limsup_{t\to\infty}t(d_K(f,\mathcal{P}^\psi_{t}))^{\varrho/((1-\epsilon)t)}\leq e\frac{\varrho}{1-\epsilon}\sigma.
	\end{align*}
	Since this is true for every $\epsilon>0$ the result follows.

	To prove the converse, assume inequality (\ref{jafna3}) is true with $\varphi$ replaced by $V_{K,\psi}$. Then for $\epsilon>0$ we can find $M$ such that for any $n\geq M$ there is a function $p_n\in \mathcal{P}^\psi_{n}$ such that 
	\begin{align}
		\|f-p_n\|_K\leq\left(\frac{e\varrho\sigma(1+\epsilon)}{n}\right)^{n/\varrho}.\label{qqqq}
	\end{align} 
	Consider the function $G:=p_M+\sum_{n=M}^{\infty}(p_{n+1}-p_n)$. Clearly we have $G=f$ on $K$. Moreover, by (\ref{qqqq}) and Lemma \ref{summulemma}, we have
	\begin{align*}
		|G|&\leq|p_M|+ \sum_{n=M}^{\infty}|p_{n+1}-p_n|\leq \|p_M\|_Ke^{MV_{K,\psi}}+\sum_{n=M}^{\infty}\|p_{n+1}-p_n\|_Ke^{(n+1)V_{K,\psi}}\\
		&\leq \|p_M\|_Ke^{MV_{K,\psi}}+\sum_{n=M}^{\infty}\left(\|p_{n+1}-f\|_K+\|p_n-f\|_K\right)e^{(n+1)V_{K,\psi}}\\
		&\leq  \|p_M\|_Ke^{MV_{K,\psi}}+2e^{V_{K,\psi}}\sum_{n=M}^{\infty}\left(\frac{e\varrho\sigma(1+\epsilon)e^{\varrho V_{K,\psi}}}{n}\right)^{n/\varrho}\\
		&\leq \|p_M\|_Ke^{M V_{K,\psi}}+2e^{V_{K,\psi}}\left(1+2^{\varrho}e\varrho\sigma(1+\epsilon)\exp\left(\varrho V_{K,\psi}+\sigma(1+\epsilon)e^{\varrho V_{K,\psi}} \right)\right).
	\end{align*}
	In particular we have 
	\begin{align*}
		\|G\|_{\{V_{K,\psi}\leq \log(r)\}}\leq \|p_M\|_Kr^{M}+2r(1+2^{\varrho}e\varrho\sigma(1+\epsilon)r^{\varrho}e^{\sigma(1+\epsilon)r^{\varrho}})
	\end{align*}
	and now it is easy to see that
	\begin{align*}
		\limsup_{r\to\infty}\frac{\log\|G\|_{\{\varphi\leq \log(r)\}}}{r^{\varrho}}\leq \sigma(1+\epsilon)
	\end{align*}
	and the second inequality of (\ref{jafna1}) follows by letting $\epsilon\to 0$. By simple calculus we can show that the first inequality of (\ref{jafna1}) follows from the second one.

\end{proof}

\bibliographystyle{plain}
\bibliography{bibref}

\begin{thebibliography}{10}

\bibitem{Ayt:2011}
A.~Aytuna and A.~Sadullaev.
\newblock {$S^*$}-parabolic manifolds.
\newblock {\em TWMS J. Pure Appl. Math.}, 2(1):6--9, 2011.

\bibitem{Ayt:2014}
A.~Aytuna and A.~Sadullaev.
\newblock Parabolic {S}tein manifolds.
\newblock {\em Math. Scand.}, 114(1):86--109, 2014.

\bibitem{Ayt:2015}
Ayd\i~n Aytuna and Azimbay Sadullaev.
\newblock Polynomials on parabolic manifolds.
\newblock In {\em Topics in several complex variables}, volume 662 of {\em
  Contemp. Math.}, pages 1--22. Amer. Math. Soc., Providence, RI, 2016.

\bibitem{Cir:1976:2}
E.~M. {\v{C}}irka.
\newblock Meromorphic continuation, and the rate of rational approximations in
  {$C^{N}$}.
\newblock {\em Mat. Sb. (N.S.)}, 99(141)(4):615--625, 1976.

\bibitem{Cir:1976}
E.~M. {\v{C}}irka.
\newblock Rational approximations of holomorphic functions with singularities
  of finite order.
\newblock {\em Mat. Sb. (N.S.)}, 100(142)(1):137--155, 166, 1976.

\bibitem{Dem:1985}
Jean-Pierre Demailly.
\newblock Mesures de {M}onge-{A}mp\`ere et caract\'erisation g\'eom\'etrique
  des vari\'et\'es alg\'ebriques affines.
\newblock {\em M\'em. Soc. Math. France (N.S.)}, (19):124, 1985.

\bibitem{Dem:2012}
J.P. Demailly.
\newblock Complex analytic and differential geometry, (version of thursday june
  21, 2012).
\newblock Free accessible book
  (https://www-fourier.ujf-grenoble.fr/~demailly/documents.html), retrieved
  17.11.2015.

\bibitem{Gon:1972}
A.~A. Gon{\v{c}}ar.
\newblock Local conditions for the single-valuedness of analytic functions.
\newblock {\em Mat. Sb. (N.S.)}, 89(131):148--164, 167, 1972.

\bibitem{Gon:1974}
A.~A. Gon{\v{c}}ar.
\newblock A local condition for the single-valuedness of analytic functions of
  several variables.
\newblock {\em Mat. Sb. (N.S.)}, 93(135):296--313, 327, 1974.

\bibitem{Gon:1975}
A.~A. Gon{\v{c}}ar.
\newblock On a theorem of {S}aff.
\newblock {\em Mat. Sb. (N.S.)}, 94(136):152--157, 160, 1975.

\bibitem{Kli:1991}
Maciej Klimek.
\newblock {\em Pluripotential theory}, volume~6 of {\em London Mathematical
  Society Monographs. New Series}.
\newblock The Clarendon Press, Oxford University Press, New York, 1991.
\newblock Oxford Science Publications.

\bibitem{Rud:1968}
Walter Rudin.
\newblock A geometric criterion for algebraic varieties.
\newblock {\em J. Math. Mech.}, 17:671--683, 1967/1968.

\bibitem{Sic:1962}
J{\'o}zef Siciak.
\newblock On some extremal functions and their applications in the theory of
  analytic functions of several complex variables.
\newblock {\em Trans. Amer. Math. Soc.}, 105:322--357, 1962.

\bibitem{Sic:1981}
J{\'o}zef Siciak.
\newblock Extremal plurisubharmonic functions in {${\bf C}^{n}$}.
\newblock {\em Ann. Polon. Math.}, 39:175--211, 1981.

\bibitem{Skoda:1977}
Henri Skoda.
\newblock Morphismes surjectifs et fibr\'es lin\'eaires semi-positifs.
\newblock In {\em S\'eminaire {P}ierre {L}elong-{H}enri {S}koda ({A}nalyse),
  {A}nn\'ee 1976/77}, volume 694 of {\em Lecture Notes in Math.}, pages
  290--324. Springer, Berlin, 1978.

\bibitem{Stoll:1977}
Wilhelm Stoll.
\newblock {\em Value distribution on parabolic spaces}.
\newblock Lecture Notes in Mathematics, Vol. 600. Springer-Verlag, Berlin-New
  York, 1977.

\bibitem{Win:1970}
T.~Winiarski.
\newblock Approximation and interpolation of entire functions.
\newblock {\em Ann. Polon. Math.}, 23:259--273. (errata insert), 1970/1971.

\bibitem{Zak:1976}
V.~P. Zakharyuta.
\newblock Extremal plurisubharmonic functions, orthogonal polynomials and
  {B}ernstein-{W}alsh theorem for analytic functions of several complex
  variables.
\newblock {\em Ann. Polon. Math.}, 33:137--148, 1974 (Russian).

\bibitem{Zer:1991}
A.~Zeriahi.
\newblock Fonction de {G}reen pluricomplexe \`a p\^ole \`a l'infini sur un
  espace de {S}tein parabolique et applications.
\newblock {\em Math. Scand.}, 69(1):89--126, 1991.

\bibitem{Zer:1996}
A.~Zeriahi.
\newblock Approximation polynomiale et extension holomorphe avec croissance sur
  une vari\'et\'e alg\'ebrique.
\newblock {\em Ann. Polon. Math.}, 63(1):35--50, 1996.

\bibitem{Zer:2000}
Ahmed Zeriahi.
\newblock A criterion of algebraicity for {L}elong classes and analytic sets.
\newblock {\em Acta Math.}, 184(1):113--143, 2000.

\end{thebibliography}
\end{document}